\documentclass[11pt]{article}
\usepackage[a4paper,margin=2.4cm]{geometry}
\usepackage[T1]{fontenc}
\usepackage{lmodern,microtype,amsmath,amssymb,amsthm,booktabs,longtable,graphicx,enumitem,array}
\usepackage{placeins,adjustbox,flafter,float}
\usepackage{xurl}
\usepackage[numbers,sort&compress]{natbib}

\usepackage[colorlinks=true,allcolors=blue,bookmarksdepth=2]{hyperref}
\hypersetup{
  pdftitle={Measuring Sudoku Difficulty: Formulation, Propagation and Uniqueness Certification},
  pdfauthor={Aled Williams}
}

\theoremstyle{plain}
\newtheorem{proposition}{Proposition}

\newtheorem{corollary}[proposition]{Corollary}
\newcommand{\C}{\mathcal C}
\newcommand{\Z}{\mathcal Z}
\newcommand{\ind}{\mathbf 1}
\newcommand{\mainref}[1]{\ref{#1}}
\newcommand{\NInstances}{32,000}

\newcommand{\NMargin}{320}

\newcommand{\CountGroupSize}{500}

\newcommand{\CaseFirstDigit}{8}
\newcommand{\CaseHardIterations}{16,367}
\newcommand{\RecoveredFindN}{32,000}
\newcommand{\RecoveredCertN}{32,000}
\newcommand{\SinglesCompleteN}{29,658}
\newcommand{\ExternalN}{105}
\newcommand{\RecoveredAllN}{32,000}
\newcommand{\RecoveredAllR}{0.991}
\newcommand{\RecoveredActiveN}{1,962}
\newcommand{\RecoveredActiveR}{0.846}
\newcommand{\RecoveredBothN}{1,928}
\newcommand{\RecoveredBothR}{0.841}
\newcommand{\NaturalAllFindBaseR}{0.653}
\newcommand{\NaturalAllFindBaseMAE}{0.416}
\newcommand{\NaturalAllFindFullR}{0.660}
\newcommand{\NaturalAllFindFullMAE}{0.481}
\newcommand{\NaturalAllCertBaseR}{0.654}
\newcommand{\NaturalAllCertBaseMAE}{0.436}
\newcommand{\NaturalAllCertFullR}{0.660}
\newcommand{\NaturalAllCertFullMAE}{0.502}
\newcommand{\NaturalPositiveFindBaseR}{0.266}

\newcommand{\NaturalPositiveFindFullR}{0.335}

\newcommand{\NaturalPositiveCertBaseR}{0.283}

\newcommand{\NaturalPositiveCertFullR}{0.357}

\newcommand{\CPAllFindFullR}{0.947}

\newcommand{\CPAllFindIndicatorR}{0.898}

\newcommand{\CPAllCertFullR}{0.924}

\newcommand{\CPAllCertIndicatorR}{0.891}

\newcommand{\CPResidualFindBaseR}{0.049}

\newcommand{\CPResidualFindFullR}{0.495}

\newcommand{\CPResidualCertBaseR}{0.029}

\newcommand{\CPResidualCertFullR}{0.335}

\newcommand{\InitialUnfinished}{2,576}
\newcommand{\PositiveFindN}{1,955}
\newcommand{\PositiveCertN}{1,935}
\newcommand{\SeventeenFindMedian}{43,134.5}

\newcommand{\SeventeenFindMaximum}{4,352,781}
\newcommand{\SeventeenFindZero}{41}
\newcommand{\SinglesPositiveFind}{1,103}
\newcommand{\NaturalLowFindBaseR}{0.352}
\newcommand{\NaturalLowFindFullR}{0.443}

\newcommand{\NaturalPositiveFindGain}{0.069}
\newcommand{\NaturalPositiveFindGainLow}{0.051}
\newcommand{\NaturalPositiveFindGainHigh}{0.087}
\newcommand{\PositiveModelFindN}{1,947}
\newcommand{\ExternalNaturalFindDone}{103}

\newcommand{\RanksFindR}{0.554}
\newcommand{\NestedFindUp}{38}
\newcommand{\NestedFindDone}{2,489}
\newcommand{\NestedFindNonTieFraction}{0.236}
\newcommand{\NestedFindBases}{17}
\newcommand{\NestedFindMedianUp}{3,860}

\newcommand{\SeventeenCertMedian}{88,149.0}

\newcommand{\SeventeenCertMaximum}{3,695,730}
\newcommand{\SeventeenCertZero}{40}
\newcommand{\SinglesPositiveCert}{1,096}
\newcommand{\NaturalLowCertBaseR}{0.358}
\newcommand{\NaturalLowCertFullR}{0.446}

\newcommand{\NaturalPositiveCertGain}{0.074}
\newcommand{\NaturalPositiveCertGainLow}{0.055}
\newcommand{\NaturalPositiveCertGainHigh}{0.095}
\newcommand{\PositiveModelCertN}{1,928}
\newcommand{\ExternalNaturalCertDone}{96}

\newcommand{\RanksCertR}{0.580}
\newcommand{\NestedCertUp}{27}
\newcommand{\NestedCertDone}{2,463}
\newcommand{\NestedCertNonTieFraction}{0.206}
\newcommand{\NestedCertBases}{14}
\newcommand{\NestedCertMedianUp}{4,664}

\newcommand{\RecoveredActiveLow}{0.829}
\newcommand{\RecoveredActiveHigh}{0.861}
\newcommand{\RecoveredBothLow}{0.826}
\newcommand{\RecoveredBothHigh}{0.857}
\newcommand{\BothZeroN}{30,038}
\newcommand{\FindOnlyN}{27}
\newcommand{\CertOnlyN}{7}
\newcommand{\PermutationCaseMin}{0}
\newcommand{\PermutationCaseMax}{15,563}

\newcommand{\PermutationFindDone}{247}
\newcommand{\PermutationFindVariableBases}{6}

\newcommand{\PermutationCertDone}{236}

\newcommand{\PermutationCertCompleteBases}{10}
\newcommand{\PermutationCertWithinVariance}{5.84}
\newcommand{\PermutationCertBetweenVariance}{13.04}
\newcommand{\FeatureBatchMin}{0.098}
\newcommand{\FeatureBatchMax}{0.105}
\newcommand{\SinglesBatchMin}{0.076}
\newcommand{\SinglesBatchMax}{0.077}
\newcommand{\CPFindBatchMin}{0.102}
\newcommand{\CPFindBatchMax}{0.107}
\newcommand{\CPCertBatchMin}{0.085}
\newcommand{\CPCertBatchMax}{0.086}
\newcommand{\AugmentationTwentyFourSolved}{37}
\newcommand{\DeletionTwentyFourSolved}{254}
\newcommand{\TwentyThreeSolved}{462}
\newcommand{\ChainBeforeLP}{5,690}
\newcommand{\ChainBeforeRows}{1,003}
\newcommand{\ChainBeforeCols}{533}
\newcommand{\ChainAfterLP}{55,415}
\newcommand{\ChainAfterRows}{986}
\newcommand{\ChainAfterCols}{518}
\newcommand{\ConditionalPositiveLPR}{0.781}
\newcommand{\ConditionalPositiveLPLow}{0.751}
\newcommand{\ConditionalPositiveLPHigh}{0.800}

\newcommand{\ConditionalPositiveNodesR}{0.748}

\newcommand{\ConditionalPositiveNodesN}{1,807}
\newcommand{\SerialTimingN}{88}

\newcommand{\CertCheaperN}{389}
\newcommand{\CertDearerN}{1,573}
\newcommand{\TaskDeltaMedian}{0.490}
\newcommand{\TaskDeltaQOne}{0.095}
\newcommand{\TaskDeltaQThree}{0.975}
\newcommand{\PairedPositiveRatio}{1.64}
\newcommand{\ReviewFindNegativeN}{15,501}

\newcommand{\ReviewFindClippedMAE}{0.445}

\newcommand{\ReviewCertNegativeN}{15,614}

\newcommand{\ReviewCertClippedMAE}{0.466}

\newcommand{\ValidationBaseOverlap}{0}
\newcommand{\MarginCrossOne}{44}
\newcommand{\MarginCrossTwo}{11}
\newcommand{\MarginCrossThree}{6}
\newcommand{\MarginCrossFour}{3}

\newcommand{\IndependentMarginN}{64}

\newcommand{\HardAssignmentFindPositive}{58}

\newcommand{\HardCrossFindR}{0.084}
\newcommand{\HardCrossCertR}{0.409}
\newcommand{\HardN}{64}
\newcommand{\HardMinClues}{21}
\newcommand{\HardMaxClues}{26}
\newcommand{\HardSourceN}{48,766}

\newcommand{\SubmissionNaturalAssignmentOrdinaryDifference}{0.373}
\newcommand{\SubmissionNaturalAssignmentOrdinaryLow}{0.056}
\newcommand{\SubmissionNaturalAssignmentOrdinaryHigh}{0.692}
\newcommand{\SubmissionNaturalAssignmentWithinDifference}{0.371}
\newcommand{\SubmissionNaturalAssignmentWithinLow}{-0.069}
\newcommand{\SubmissionNaturalAssignmentWithinHigh}{0.798}
\newcommand{\SubmissionRanksFindLess}{50}

\newcommand{\SubmissionRanksFindRatio}{0.630}
\newcommand{\SubmissionRanksFindR}{0.360}
\newcommand{\SubmissionRanksCertLess}{59}

\newcommand{\SubmissionRanksCertRatio}{0.611}
\newcommand{\SubmissionRanksCertR}{0.677}
\newcommand{\SubmissionCounterAssignmentR}{0.048}

\newcommand{\SubmissionCounterNaturalR}{0.394}

\newcommand{\SubmissionCounterCPR}{0.496}
\newcommand{\SubmissionCounterCPWithin}{0.441}

\newcommand{\SubmissionAssignmentPositiveN}{58}
\newcommand{\SubmissionAssignmentPositiveR}{0.100}
\newcommand{\SubmissionAssignmentPositiveLow}{-0.188}
\newcommand{\SubmissionAssignmentPositiveHigh}{0.351}
\newcommand{\SubmissionMarginPositiveN}{19}
\newcommand{\PreBenchmarkN}{320}
\newcommand{\PreExternalN}{64}
\newcommand{\PreAllN}{384}
\newcommand{\PrePrimaryCalls}{3,072}
\newcommand{\PrePrimaryFinished}{3,062}
\newcommand{\PrePrimaryUnresolved}{10}
\newcommand{\PreExtendedN}{9}
\newcommand{\PreExtendedCalls}{72}
\newcommand{\PreExtendedFinished}{72}

\newcommand{\PreRepeatN}{62}

\newcommand{\PreTimeLimit}{10}
\newcommand{\PreExtendedLimit}{60}

\newcommand{\PreAssignmentFindOffZero}{313}

\newcommand{\PreAssignmentCertOffZero}{311}
\newcommand{\PreHardAssignmentFindRank}{0.086}
\newcommand{\PreNaturalFindOnZero}{301}
\newcommand{\PreNaturalFindOffZero}{238}

\newcommand{\PreHardNaturalFindRank}{0.096}
\newcommand{\PreCommonOnTaskR}{0.798}
\newcommand{\PreCommonPositiveN}{19}
\newcommand{\PreHardAssignmentOnR}{0.031}

\newcommand{\PreHardNaturalOnR}{0.404}

\newcommand{\PreCommonOffTaskR}{0.833}
\newcommand{\PreHardAssignmentOffR}{0.075}

\newcommand{\PreHardNaturalOffR}{0.218}
\newcommand{\PreHardNaturalOffLow}{-0.066}
\newcommand{\PreHardNaturalOffHigh}{0.463}
\newcommand{\PreFigureCases}{2}
\newcommand{\PreFigureCalls}{8}
\newcommand{\PreFigureFinished}{8}

\newcommand{\PreFigureLeftCertOff}{16,681}

\newcommand{\PreFigureRightCertOff}{26,353}
\title{\LARGE{Measuring Sudoku Difficulty: Formulation, Propagation and Uniqueness Certification}}

\author{
    {\Large Aled Williams \vspace{3.0mm}}\\
    Department of Computer Science\\
    Royal Holloway, University of London\\
    London, UK \vspace{2.0mm} \\
    \texttt{aled.williams@rhul.ac.uk}
}
\date{}
\begin{document}
\maketitle
\begin{abstract}
Finding a Sudoku completion and proving that the completion is unique are different computational tasks. Despite both concerning the same puzzle, their computational effort need not be closely related. In this paper, we investigate how this relationship depends on the mathematical formulation, solver and measure of computational effort. Using \NInstances\ uniquely solvable puzzles and \HardN\ harder instances, we compare integer programming formulations and CP-SAT. Further, we prove that the raw natural formulation has an extremely weak linear programming relaxation, even with uniqueness exclusions. Computational experiments show that the relationship between completion and certification effort depends on the formulation, puzzle population, solver and effort measure, rather than on the puzzle alone.
\vspace{3.0mm}

\noindent\textbf{Keywords:} Sudoku, integer programming, constraint programming, computational difficulty, uniqueness certification, constraint propagation.
\end{abstract}


\section{Introduction}
Finding a completed Sudoku grid and establishing that it is the only completion are different computational tasks. The first requires a grid satisfying the clues and Sudoku rules. The second requires ruling out every alternative. Note that to establish uniqueness, we start with a valid completion supplied in advance and run the solver afresh. Thus, the recorded effort excludes the cost of finding that reference completion.

We investigate how the relationship between these tasks depends on both the mathematical formulation and the solver operations being measured. Building on the assignment and natural cell-value models introduced in our earlier paper \cite{williams2026}, which focused on modelling choices motivated by pedagogy, we now examine the computational effort required to find a completion and establish its uniqueness. 

The following example motivates the comparison. Two seventeen-clue puzzles from our previously published Sudoku benchmark \cite{dataset2026} can both be completed using only naked and hidden singles. A \textit{naked single} identifies the only possible digit for a cell (e.g. if a row already contains digits one through eight, its only empty cell must contain nine). A \textit{hidden single} identifies the only possible cell for a digit within a row, column or block. For example, suppose the first two columns of a block contain digits one through six. The final column must therefore contain the digits seven, eight and nine. If other givens rule out seven in the first two cells of that column, the remaining cell must contain seven, even if other digits are still candidates there. The key distinction is that, in a naked single, the cell determines the digit, whereas in hidden single, the digit determines the cell. 

Yet, with the (raw) natural cell-value formulation (outlined in Section 4.2), the uniqueness-certification runs report zero and \CaseHardIterations\ linear programming (LP) iterations for the two puzzles in Figure~\ref{fig:opening}, respectively. It should be noted that in the zero-iteration case, the solver establishes uniqueness during presolve, when it simplifies the model before the main search. This contrast raises a modelling question: which deductions does a representation expose directly, and which must the solver recover?

With the HiGHS presolve option set to off, the same uniqueness tests require \PreFigureLeftCertOff\ and \PreFigureRightCertOff\ LP iterations, respectively. Section~\ref{sec:presolve} extends this check to a fixed sample of benchmark and harder puzzles; Appendix~\ref{app:presolve} records the protocol and the two opening cases.

\begin{figure}[ht!]\centering\includegraphics[width=\linewidth]{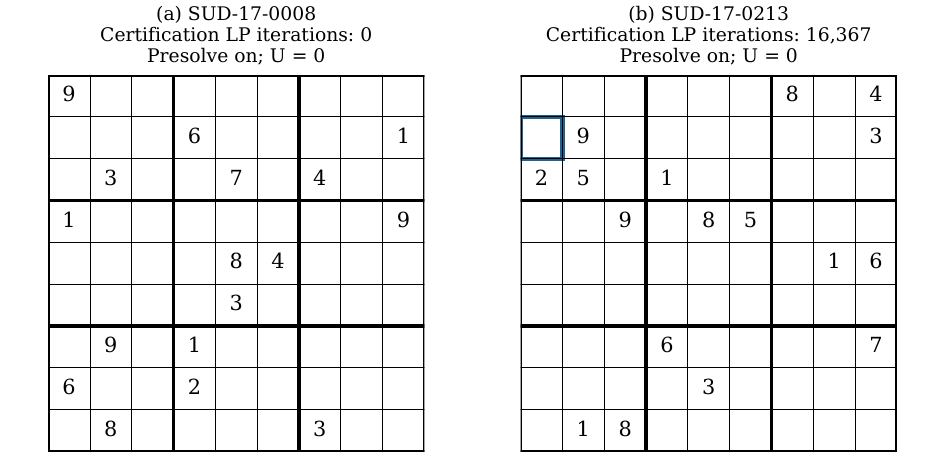}
\caption{This figure shows two seventeen-clue puzzles that can both be completed using only naked and hidden singles. Here $U$ denotes the number of cells unresolved after repeated singles. Note that the outlined cell within the right-hand puzzle is the only possible position for digit \CaseFirstDigit\ within its block. The reported certification LP iterations use the raw natural cell-value formulation with presolve enabled. Table~\ref{tab:opening-presolve} compares both tasks with presolve on and off.}
\label{fig:opening}\end{figure}

The above examples show that puzzles with the same number of clues can require very different computational effort. More generally, effort also depends on the formulation, solver, task (e.g. completion or certifying uniqueness), and operations measured. To make this dependence explicit, we write
$$
 D_{A,F,T}(P)
$$
for the measured computational effort (or difficulty) on puzzle $P$, using formulation $F$, algorithm (or solver) $A$, and task $T$. This is an instance of the broader question of how problem features relate to algorithmic performance \citep{smithmiles2012}. It should be emphasised that the measurements here concern computation, not necessarily human solving difficulty.

We compare binary assignment and explicit natural cell-value formulations using the same mixed-integer linear programming (MILP) solver, and study an all-different model implemented in CP-SAT separately. Every puzzle in the exact-count benchmark of \cite{dataset2026} has exactly one completion, meaning that reducing the clue count in this collection does not introduce puzzles with many admissible completions.

Three natural questions guide the paper:
\begin{itemize}
    \item How much variation remains among puzzles with the same clue count, and how does it depend on the formulation? 
    \item Are completion and fresh uniqueness certification difficult on the same puzzles? 
    \item Which structural features explain their recorded effort? 
\end{itemize}
Clue-deletion margins and matched clue additions provide focused extensions of the third question.

We address these questions through paired task measurements, comparisons of alternative formulations, and controlled changes to digit labels and clues. The mathematical analysis explains what the raw relaxations retain; the experiments show how strengthening and preprocessing affect recorded effort. Here, ``raw'' denotes the pairwise cell-value formulation before the additional equalities or separate singles preprocessing considered later, which does not mean that solver presolve is disabled. We compare it with strengthened variants and a high-level all-different model.

Appendix~\ref{app:math} gives additional mathematical details. Appendix~\ref{app:implementation} specifies the computational methods and validation, and Appendix~\ref{app:comparisons} contains the supporting experimental comparisons.


\section{Related work and notions of Sudoku difficulty}
\label{sec:related}

\subsection{Human ratings and propagation}
Pel{\'a}nek \citep{pelanek2014} evaluates difficulty metrics against human solving data, distinguishing the complexity of individual logical steps from dependencies between them. More recent approaches use SAT clause distributions and solving strategies \citep{eisenkolb2025}, clue and candidate features \citep{higgins2025}, and structural CSP metrics \citep{thangamaniCSP2026}. Notice et al. \citep{notice2023} use related features to analyse instances and predict metaheuristic solver performance, showing that conclusions depend on the chosen performance measure. These studies motivate structural measures of difficulty, although our focus is on solver operations rather than human ratings.

Simonis \citep{simonis2005} studies Sudoku through constraint propagation and model strengthening. Lynce and Ouaknine \citep{lynce2006} demonstrate the benefits of redundant clauses in SAT encodings, while redundancy among Sudoku constraints is examined by \cite{demoen2014}. These studies illustrate how equivalent formulations can lead to different solver behaviour. Provan \citep[Theorem~2]{provan2009} shows that completion by pigeon-hole rules implies uniqueness in the nonnegative assignment linear system, providing context for our propagation analysis. Dukes and Nimegeers \citep{dukes2024} further study the assignment linear system and fractional completions for generalised Sudoku.

\subsection{Clue density, structure and formulation}
Ans{\'o}tegui et al. \citep{ansotegui2006} investigate clue patterns and computational hardness in generalised Sudoku. Related studies examine simulated annealing and clue density \citep{lewis2007}, analogue SAT solver dynamics \citep{ercsey2012}, and density, backtracking and freezing transitions \citep{prates2018}. These results depend on the puzzle population being studied. In particular, random clue removal need not preserve uniqueness, whereas all puzzles in our experiments have unique completions.

Integer programming formulations appear in the teaching-oriented work of Chlond \citep{chlond2005} and Weiss and Rasmussen \citep{weiss2007}, with broader computational comparisons given by \cite{coelho2014}. Our earlier paper \cite{williams2026} introduced the assignment and natural cell-value formulations from a pedagogical perspective, while here we compare their computational behaviour. Thangamani and Regulagedda \citep{thangamani2026} compare CP, SAT and IP on uniqueness-validated puzzles, but do not measure completion and certification separately.

Lagerkvist \citep{lagerkvist2026} develops unique-solution corpora across grid sizes, studies propagation and clue addition, and compares alternative approaches to excluding known solutions. Our experiments similarly examine propagation and uniqueness, but record completion and fresh-reference exclusion separately for two integer formulations within the same solver. We also consider alternative exclusion constraints, digit relabellings and nested clue additions.

Dillon's exposition of Tdoku \citep{tdokuStory} illustrates how benchmark selection and representation affect solver performance. Its protocol searches for up to two solutions, whereas our certification task starts with a supplied completion and a fresh solver state.

\subsection{Uniqueness, defining sets and correction sets}
The Another Solution Problem asks whether a second solution exists when one solution is supplied. Yato and Seta \citep{yato2003} establish ASP-completeness for generalised Number Place. This asymptotic result does not directly describe computational effort for fixed $9\times9$ puzzles. Egidy et al. \citep{egidy2025} provide a broader analysis of second-solution complexity.

McGuire et al. \citep{mcguire2014} establish the minimum-clue result using unavoidable sets and hitting-set enumeration. A minimal puzzle loses uniqueness after any clue deletion, whereas a minimum-clue puzzle has the smallest possible number of clues. Critical determining sets provide a related framework \citep{cooper2014}. Tjusila et al. \citep{tjusila2024} formulate minimum clue selection as a bilevel problem, while Eppstein and Zhang \citep{eppstein2026} study how the required clue count depends on the completed grid.

Our clue-deletion measure is also related to correction sets: Sudoku rules and the exclusion constraint are fixed, while clues can be removed. The distinction between minimum cardinality and inclusion minimality is standard in this literature \cite{liffiton2008}. We therefore use the term \emph{minimum clue-deletion margin}.


\section{Benchmark dataset}\label{sec:data}
The benchmark \cite{dataset2026} contains \NInstances\ standard Sudoku puzzles that are uniquely completable, with \CountGroupSize\ instances at every exact clue count from $17$ through $80$. The release stores each cell's position, given digit and solution digit, while zero denotes a blank.

The two generation procedures need to be distinguished carefully when interpreting clue-count curves. For groups with up to $23$ clues, the generator uses distinct records from a pinned Royle collection, applies Sudoku-preserving symmetries and adds correct clues where needed. From $24$ clues onwards, the generator starts from randomised completed grids and removes clues while checking uniqueness. Neither procedure samples uniformly from all uniquely solvable puzzles. Thus, a change in observed effort at the boundary could reflect the generation procedure as well as clue count.

We reconstruct every puzzle and stored solution, check the clues and Sudoku rules, and inspect every archived uniqueness record. Independent integer enumeration provides a second uniqueness check on a fixed stratified sample. Appendix~\ref{app:data} gives the coverage, source-file audit and exact symmetry checks. It also distinguishes construction from the later AMPL/HiGHS verification, whose archived code provides no evidence of replacing puzzles after a verification timeout. 


\section{Completion and uniqueness-certification models}\label{sec:models}
Let $N=\{1,\ldots,9\}$, let $\C(P)\subseteq N\times N$ be the clue positions, and let $g_{ij}$ be the given digit at a clue position. Write $C=|\C(P)|$ for the number of clues, where $17 \le C \le 80$ applies to our benchmark. The set of all completed Sudoku grids is denoted by $\Z$. The reference completion $\bar z\in\Z$ satisfies every clue.

\subsection{Binary assignment formulation}
The assignment model records the choice of a digit for each cell. For each $i,j,k\in N$, let $x_{ijk}=1$ precisely when digit $k$ occupies cell $(i,j)$, and 0 otherwise. Each Sudoku rule can then be expressed by requiring a suitable group of these choices to sum to one. The formulation is
\begin{align}
 \sum_{i\in N}x_{ijk}&=1 &&j,k\in N,\label{eq:columns}\\
 \sum_{j\in N}x_{ijk}&=1 &&i,k\in N,\\
 \sum_{q=0}^{2}\sum_{r=0}^{2}x_{a+q,b+r,k}&=1 &&a,b\in\{1,4,7\},\ k\in N,\label{eq:blocks}\\
 \sum_{k\in N}x_{ijk}&=1 &&i,j\in N,\label{eq:cells}\\
 x_{ij,g_{ij}}&=1 &&(i,j)\in\C(P),\label{eq:clues}\\
 x_{ijk}&\in\{0,1\} &&i,j,k\in N.
\end{align}
It should be noted that constraints~\eqref{eq:columns}--\eqref{eq:blocks} require each digit to appear once in every column, row and block. Constraint~\eqref{eq:cells} assigns exactly one digit to each cell, and \eqref{eq:clues} fixes the given clues. Thus, every Sudoku rule becomes a linear equality. The model features $729$ binary variables and $324$ Sudoku equalities before clue fixing. Observe that completion is a feasibility problem, so we can use a constant objective. Further, the corresponding cell value is $z_{ij}=\sum_{k\in N}k x_{ijk}$.

Given a solution (completion), to test uniqueness, we require a candidate grid to disagree with the supplied completion in at least one cell. Observe that in the assignment variables this can be achieved by adding
\begin{equation}
 \sum_{i\in N}\sum_{j\in N}x_{ij,\bar z_{ij}}\le80,\label{eq:nogood}
\end{equation}
where $x_{ij,\bar z_{ij}}$ equals one if the candidate grid agrees with the supplied completion at cell $(i,j)$, and zero otherwise. Thus, the left-hand side counts the number of agreeing cells, and the inequality ensures that at least one cell differs. 

\begin{proposition}\label{prop:exclude}
The constraint \eqref{eq:nogood} excludes exactly the completed grid $\bar z$ from the assignment model. With the original clues imposed, the resulting feasibility problem is infeasible if and only if $\bar z$ is the unique completion.
\end{proposition}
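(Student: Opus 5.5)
The plan is to first identify the binary feasible points of the assignment model, without the clue constraints \eqref{eq:clues}, with the set $\Z$ of completed grids. Given a binary $x$ satisfying \eqref{eq:columns}--\eqref{eq:cells}, constraint \eqref{eq:cells} says each cell receives exactly one digit. So $z_{ij}=\sum_k k x_{ijk}$ is a well-defined element of $N$, and $x_{ijk}=1$ if and only if $z_{ij}=k$. The column, row and block equalities then say that each digit occurs exactly once in each column, row and block, so $z\in\Z$. Conversely, the indicator encoding of any $z\in\Z$ satisfies every equality. This gives a bijection between binary feasible points and $\Z$, under which adding \eqref{eq:clues} restricts to the grids agreeing with $g$ on $\C(P)$, i.e.\ the completions of $P$.

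Next I will analyse \eqref{eq:nogood} under this bijection. For the point encoding a grid $z$, the term $x_{ij,\bar z_{ij}}$ equals $\ind[z_{ij}=\bar z_{ij}]$. The left-hand side therefore counts the cells where $z$ agrees with $\bar z$, which is an integer between $0$ and $81$. It equals $81$ exactly when $z=\bar z$, so \eqref{eq:nogood} holds if and only if $z\neq\bar z$. Hence adding \eqref{eq:nogood} removes exactly one point, the encoding of $\bar z$, which proves the first claim. The only point needing care is that the removed grid is genuinely in the model: $\bar z\in\Z$ by assumption, so exactly one feasible point is removed rather than none.

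For the equivalence, the feasible set with clues and \eqref{eq:nogood} corresponds to the completions of $P$ other than $\bar z$. Since $\bar z$ satisfies every clue, it is itself a completion. The problem is therefore infeasible if and only if there is no other completion, i.e.\ $\bar z$ is the unique completion. None of these steps is a real obstacle; the one thing to state explicitly is that integrality is essential. For fractional $x$ the agreement count can lie strictly between $80$ and $81$, so the statement concerns the integer model and not its LP relaxation.
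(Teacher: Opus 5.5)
Your proof is correct and follows essentially the same route as the paper. You use \eqref{eq:cells} to read $x_{ij,\bar z_{ij}}$ as an agreement indicator, so the left-hand side of \eqref{eq:nogood} counts agreeing cells and equals $81$ only for $\bar z$; the fact that $\bar z$ is itself a completion then gives the equivalence. Your explicit bijection between binary feasible points and $\Z$, and your remark that the claim concerns the integer model rather than its LP relaxation, spell out what the paper leaves implicit.
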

\begin{proof}
By \eqref{eq:cells}, it follows that $x_{ij,\bar z_{ij}}$ is one exactly when the candidate grid agrees with $\bar z$ at $(i,j)$. The left-hand side of \eqref{eq:nogood} therefore counts agreeing cells. Further, the left-hand side is $81$ exactly for $\bar z$, and at most $80$ for every different grid. All other Sudoku and clue constraints are retained. Since $\bar z$ is already a valid completion, absence of a different feasible grid is equivalent to uniqueness.
\end{proof}

It is worth noting that the earlier maximisation objective in \cite{williams2026} assigns coefficient $-1$ to reference assignments and $+1$ to all others. A grid differing in $m$ cells therefore attains an objective value of $-81+2m$, so an optimal value of $-81$ proves uniqueness. In contrast, the exclusion constraint \eqref{eq:nogood} allows both completion and uniqueness certification to be treated as feasibility problems with a constant objective, where uniqueness is established when the exclusion model is proved infeasible.

\subsection{Natural cell values and explicit linearisation}
The natural variables describe the board directly: $z_{ij}\in N$ is the value in cell $(i,j)$. To enforce the Sudoku rules, we must enforce that any two cells in a common row, column or block must have different values. 

Let $E$ be the set of all pairs of distinct cells sharing a row, column or block, with each pair included only once. We order the cells in each pair according to their row-major positions, writing (u<v), and introduce a binary variable $y_{uv}$. The two possible orderings of their values are represented by
\begin{align}
 z_u-z_v&\ge1-9y_{uv},\label{eq:pair1}\\
 z_v-z_u&\ge1-9(1-y_{uv}),\label{eq:pair2}
\end{align}
where the binary variable $y_{uv}$ chooses which value is larger. If $y_{uv}=0$, then \eqref{eq:pair1} requires $z_u\ge z_v+1$, while \eqref{eq:pair2} adds no restriction to the cell bounds. If instead $y_{uv}=1$, then the roles are reversed. Conversely, any two unequal integer values permit one of these choices. The inequalities therefore require exactly that the two values differ. 

There are $810$ distinct pairs. The initial $27\binom92$ count includes $81$ row--block duplicates and $81$ column--block duplicates. The completion model has $81$ cell variables, $810$ auxiliary binaries, $1620$ inequalities, and clue bound fixings. The raw baseline uses $M=9$. We later consider strengthened variants with redundant equalities and common singles preprocessing, all of which preserve the same integer completions.

To exclude the reference completion, at least one cell must take a smaller or larger value than its reference value. The binary variables $d^-_{ij}$ and $d^+_{ij}$ indicate these two possibilities, where the signs indicate a smaller or larger digit, respectively. We impose
\begin{align}
 z_{ij}&\le\bar z_{ij}-1+(10-\bar z_{ij})(1-d^-_{ij}),\label{eq:less}\\
 z_{ij}&\ge\bar z_{ij}+1-\bar z_{ij}(1-d^+_{ij}),\label{eq:greater}\\
 d^-_{ij}+d^+_{ij}&\le1 &&i,j\in N,\\
 \sum_{i,j\in N}(d^-_{ij}+d^+_{ij})&\ge1.\label{eq:some}
\end{align}
The first two inequalities are imposed for all cells. Note that, since the reference completion $\bar z$ is already known, its values are fixed constants rather than decision variables. The constraints are therefore linear, and the resulting formulation is mixed-integer linear. The coefficients are chosen so that an inactive inequality imposes no additional restriction beyond the cell bounds.

\begin{proposition}
For integral cell values $z\in N^{9\times9}$, binary variables satisfying \eqref{eq:less}--\eqref{eq:some} exist if and only if $z\ne\bar z$.
\end{proposition}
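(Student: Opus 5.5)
The proof splits into two directions, and both reduce to a case analysis on the binary variables at a single cell. First I record what \eqref{eq:less} and \eqref{eq:greater} say for fixed $(i,j)$ and integral $z_{ij}\in N$, writing $\bar z=\bar z_{ij}$ for brevity.

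If $d^-_{ij}=1$, then \eqref{eq:less} reads $z_{ij}\le \bar z-1$. If $d^-_{ij}=0$, it reads $z_{ij}\le 9$, which is implied by the cell bound. Similarly, if $d^+_{ij}=1$, then \eqref{eq:greater} reads $z_{ij}\ge\bar z+1$. If $d^+_{ij}=0$, it reads $z_{ij}\ge 1$, which again is implied by the cell bound. This is the ``inactive inequality imposes no restriction'' claim made just before the statement, and checking these four substitutions is the core computation. So an active indicator forces $z_{ij}<\bar z$ or $z_{ij}>\bar z$ respectively, and an inactive one is vacuous.

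\emph{Only if.} Suppose binary variables satisfying \eqref{eq:less}--\eqref{eq:some} exist. By \eqref{eq:some}, some cell has $d^-_{ij}+d^+_{ij}\ge1$, so $d^-_{ij}=1$ or $d^+_{ij}=1$; the constraint $d^-_{ij}+d^+_{ij}\le1$ is not even needed here. The case analysis then gives $z_{ij}\le\bar z-1$ or $z_{ij}\ge\bar z+1$. Either way $z_{ij}\ne\bar z_{ij}$, hence $z\ne\bar z$.

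\emph{If.} Suppose $z\ne\bar z$, and choose one cell $(i^*,j^*)$ with $z_{i^*j^*}\ne\bar z_{i^*j^*}$. Set $d^-_{i^*j^*}=1$ if $z_{i^*j^*}<\bar z_{i^*j^*}$, and otherwise set $d^+_{i^*j^*}=1$. Set all other indicators to zero.

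Since the values are integers, $z_{i^*j^*}<\bar z_{i^*j^*}$ means $z_{i^*j^*}\le\bar z_{i^*j^*}-1$, so the activated inequality holds; the other case is symmetric. Every inactive inequality holds by the cell bounds $1\le z_{ij}\le 9$. Each cell has at most one active indicator, so $d^-_{ij}+d^+_{ij}\le1$ holds everywhere. Finally, the sum in \eqref{eq:some} equals exactly $1$.

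No step is a genuine obstacle. The only points needing care are the use of integrality to turn strict inequality into a gap of one, and the coefficient check that the inactive forms reduce exactly to $z_{ij}\le9$ and $z_{ij}\ge1$ for every $\bar z\in N$. That check is what guarantees the constraints add nothing beyond the cell bounds when switched off.
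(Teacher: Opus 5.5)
Your proof is correct and follows essentially the same route as the paper. In both, an active indicator forces $z_{ij}<\bar z_{ij}$ or $z_{ij}>\bar z_{ij}$ at some cell, and conversely you activate the appropriate indicator at one differing cell, with the inactive inequalities reducing to the cell bounds $z_{ij}\le 9$ and $z_{ij}\ge 1$.
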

\begin{proof}
Notice that an active $d^-_{ij}$ forces $z_{ij}<\bar z_{ij}$, while an active $d^+_{ij}$ forces $z_{ij}>\bar z_{ij}$. Observe that at least one must be active by \eqref{eq:some}, so equality of the grids is impossible. Conversely, if a cell differs, activate its appropriate indicator and set all others to zero. An inactive upper inequality reduces to $z_{ij}\le9$ and an inactive lower inequality to $z_{ij}\ge1$. Thus all inequalities hold. The constants are tight because the inactive inequalities must permit the endpoints $9$ and $1$, respectively.
\end{proof}

\subsection{Exclusion through the existing order variables}\label{sec:order}
The preceding exclusion constraint introduces additional binary variables. It turns out that it is possible to exclude the reference completion using the order variables already present in the natural formulation. Recall that, in an integer solution, the inequalities \eqref{eq:pair1}--\eqref{eq:pair2} imply that $y_{uv}=1$ if and only if $z_u<z_v$.

Let $\bar y$ be the ordering induced by the reference grid $\bar z$, and let $E_R\subset E$ contain only those pairs of cells that share a row. There are $324$ such pairs. We can then impose
\begin{equation}
 \sum_{e\in E_R:\bar y_e=0}y_e+
 \sum_{e\in E_R:\bar y_e=1}(1-y_e)\ge1.\label{eq:orderexclude}
\end{equation}
Notice that this constraint requires at least one ordering to differ from that of the reference grid.

\begin{proposition}\label{prop:order}
The constraint~\eqref{eq:orderexclude} excludes exactly the reference Sudoku grid among integer feasible solutions. The same is true with $E$ in place of $E_R$.
\end{proposition}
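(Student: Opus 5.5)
The plan is first to pin down what the order variables look like at an integer feasible point, and then to show that the row orderings alone already determine the whole grid.

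\emph{Step 1: order variables at integer points.} Take any integer feasible solution $(z,y)$ of the natural completion model. As recalled before the statement, \eqref{eq:pair1}--\eqref{eq:pair2} force $y_{uv}=1$ exactly when $z_u<z_v$, for every pair $(u,v)\in E$. So $y$ is a function of $z$, and at $z=\bar z$ it equals $\bar y$. The left-hand side of \eqref{eq:orderexclude} counts the row pairs whose ordering differs from $\bar y$. It is therefore zero at the reference grid, and the reference grid (with its forced $y=\bar y$) violates the constraint.

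\emph{Step 2: row orderings determine the grid.} Suppose $z\ne\bar z$ is another feasible grid. Each row of a completed Sudoku grid is a permutation of $N$. Within row $i$, the ordering relation on its $9$ cells determines the values completely: the cell holding digit $k$ is the unique cell that is larger than exactly $k-1$ others in that row, so its value is its rank. Hence if $z$ and $\bar z$ induced the same orderings on all $324$ row pairs, every row would coincide and $z=\bar z$. Because $z\ne\bar z$, some row pair $e$ has $y_e\ne\bar y_e$. The corresponding term in \eqref{eq:orderexclude} equals one, so the constraint holds and $(z,y)$ remains feasible. Together with Step 1, this shows that exactly the reference grid is excluded.

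\emph{Step 3: the same result with $E$.} If $E_R$ is replaced by $E$, the reference grid still gives a left-hand side of zero, since $y=\bar y$ on all of $E$. Any other grid already differs on some row pair in $E_R\subset E$, so its left-hand side is at least one.

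The only substantive step is the rank argument in Step 2. The key point there is that each row contains all nine digits, so ordering determines value; with repeated or missing digits it would fail. The rest is bookkeeping about which terms of the sum equal one.
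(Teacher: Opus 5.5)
Your proof is correct and follows essentially the same route as the paper's: the pair inequalities force a unique ordering vector at each integer grid, and since each row is a permutation of $N$, a cell's value is one plus the number of smaller entries in its row. Hence agreement with $\bar y$ on $E_R$ (and so on $E$) is equivalent to $z=\bar z$.
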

\begin{proof}
Each row is a permutation of $N$, so the value in a cell is one plus the number of smaller values in that row. Hence the row orderings determine every cell value. Agreement with all of $\bar y$ on $E_R$ is equivalent to agreement with $\bar z$. Conversely, a grid has only one ordering vector because peer values differ. Excluding either the row vector or the full vector therefore excludes exactly $\bar z$.
\end{proof}

We therefore have three ways to exclude the reference completion: using additional binary variables to indicate differences in cell values, using the existing order variables for all pairs in (E), or using only the row pairs in $E_R$. Although the latter two approaches introduce no new binary variables, they need not require less computational effort, since their relaxations and the deductions available to the solver may differ. We compare all three approaches experimentally.

\subsection{What the continuous relaxations retain}\label{sec:relaxation}
The pairwise constraints enforce the Sudoku rules when the variables are integral. Their continuous relaxation behaves very differently. Once the ordering variables may take fractional values, every choice of cell values between one and nine that agrees with the clues can be extended to a feasible relaxed solution. This remains true for each of the three raw certification formulations. Note that by projecting onto the cell values $z$, we mean considering which values of $z$ are feasible when the auxiliary variables are allowed to take any values satisfying the relaxed constraints. The next proposition makes this loss of information precise.
\begin{proposition}\label{prop:box}
Before strengthening or solver reductions, the projections onto $z$ of the raw natural completion LP and each raw certification LP (cell-witness, row-order and full-order exclusion) are all exactly
\[
 B(P)=\{z\in[1,9]^{81}:z_{ij}=g_{ij}\text{ for }(i,j)\in\C(P)\}.
\]
\end{proposition}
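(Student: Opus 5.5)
The proof will establish the two inclusions separately. For every relaxation, the projection is contained in $B(P)$ for immediate reasons: the cell bounds $1\le z_{ij}\le 9$ and the clue fixings $z_{ij}=g_{ij}$ belong to each LP. The substantive direction is the reverse one. Fix any $z\in B(P)$; I will construct values of the auxiliary variables that satisfy every relaxed constraint. The plan is to exhibit explicit fractional witnesses, rather than to argue through duality.

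\textbf{Step 1 (pairwise constraints).} For each pair $u<v$ in $E$, I will try the midpoint $y_{uv}=1/2$ first. Then \eqref{eq:pair1} becomes $z_u-z_v\ge -7/2$ and \eqref{eq:pair2} becomes $z_v-z_u\ge -7/2$. These hold only when $|z_u-z_v|\le 7/2$, which can fail because the difference can reach $8$. The choice must therefore depend on $z$. I will set
\[
 y_{uv}=\frac{1+z_v-z_u}{9}+t,
\]
where $t$ is chosen to leave slack. With $t=0$, \eqref{eq:pair1} holds with equality. Constraint \eqref{eq:pair2} reads $z_v-z_u\ge 1-9+9y_{uv}=z_v-z_u-7$, so it holds with slack $7$. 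Because $|z_v-z_u|\le 8$, the value $y_{uv}$ lies in $[-7/9,1]$, which can be negative. I will therefore clip it, taking
\[
 y_{uv}=\min\Bigl\{1,\max\Bigl\{\tfrac{1+z_v-z_u}{9},\,\tfrac{8+z_v-z_u}{9}-\ldots\Bigr\}\Bigr\}.
\]
More cleanly, feasibility requires $y$ to lie in the interval
\[
 \Bigl[\tfrac{1-(z_u-z_v)}{9},\ \tfrac{8+(z_v-z_u)}{9}\Bigr]\cap[0,1].
\]
Writing $\delta=z_u-z_v\in[-8,8]$, this interval is $[(1-\delta)/9,(8-\delta)/9]\cap[0,1]$. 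Its length before intersection is $7/9$, and its endpoints satisfy $(1-\delta)/9\le 1$ and $(8-\delta)/9\ge 0$, so it always meets $[0,1]$. The key check is therefore this one-dimensional interval computation, and it uses only that $M=9$ exceeds the maximal difference $8$ by at least $1$.

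\textbf{Step 2 (exclusion constraints).} For the cell-witness formulation, I will pick a single cell $(i,j)$ that is not a clue; such a cell exists since $C\le 80$. If $\bar z_{ij}<9$, I will set $d^+_{ij}=1/2$, and otherwise $d^-_{ij}=1/2$, with all other indicators zero. I then need to check \eqref{eq:some}, which is borderline. Setting $d^+_{ij}=1$ would instead force $z_{ij}\ge \bar z_{ij}+1$, which is too strong. I will therefore take $d^+_{ij}=\lambda$ and require
\[
 z_{ij}\ge \bar z_{ij}+1-\bar z_{ij}(1-\lambda),
\]
that is, $\lambda\le (z_{ij}-1)/\bar z_{ij}+\ldots$. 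This fails at $z_{ij}=1$ with $\lambda$ forced to $1$. To repair this, I will spread the exclusion across both indicators and across several cells. Summing small amounts $d^-_{ij}=d^+_{ij}=\varepsilon$ over all $81-C\ge 1$ free cells does not suffice when only one cell is free. Instead, I will use the fact that \eqref{eq:less} with $d^-=\alpha$ requires $z\le 9-\alpha(10-\bar z)$, while \eqref{eq:greater} with $d^+=\beta$ requires $z\ge 1+\beta\bar z$. For any $z\in[1,9]$ I need $\alpha+\beta\ge 1$ with $\alpha(10-\bar z)\le 9-z$ and $\beta\bar z\le z-1$. This works when $(9-z)/(10-\bar z)+(z-1)/\bar z\ge 1$ holds for all $z$. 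The expression is linear in $z$, so I check the endpoints. At $z=1$ it equals $8/(10-\bar z)\ge 1$ exactly when $\bar z\ge 2$, and at $z=9$ it equals $8/\bar z\ge 1$ exactly when $\bar z\le 8$. This is the main obstacle: a cell with $\bar z\in\{1,9\}$ fails alone. I will exploit the clue cells as well. A clue cell has $z=\bar z$, so \eqref{eq:less} and \eqref{eq:greater} allow $\alpha\le 9-\bar z... /(10-\bar z)$ and similar nonzero amounts, and across the $81$ cells these fractions sum to well above $1$ (for example, with $\alpha=\beta=1/2$ at the many cells where $\bar z$ is mid-range). Constructing this sum carefully is the crux of the argument.

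\textbf{Step 3 (order exclusions).} For \eqref{eq:orderexclude}, with either $E_R$ or $E$, I will check that the $y$ chosen in Step 1 can be moved within its feasible interval toward $1-\bar y_e$ on enough pairs. Each interval has length at least $7/9$ after intersection near the middle values, so each pair contributes a substantial amount, and summing over the $324$ row pairs easily exceeds $1$.
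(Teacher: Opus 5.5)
Your inclusion into $B(P)$ and your Step~1 interval computation are correct and match the paper. Step~2, however, stalls on an obstacle you created yourself by insisting that the witness cell be blank. Nothing requires this: at a clue cell the relaxation simply has $z_v=g_v\in[1,9]$, and fractional witnesses are allowed there. Your own endpoint calculation already shows that for \emph{any} cell $v$ with $2\le\bar z_v\le 8$, the quantity $(9-z_v)/(10-\bar z_v)+(z_v-1)/\bar z_v$ is at least $1$ for every $z_v\in[1,9]$. Every completed grid has $63$ such cells. So one cell carries the whole exclusion, and the multi-cell sum you call the crux is neither needed nor actually constructed. The paper takes a cell with $\bar z_v=5$, sets $d^-_v=(9-z_v)/8$ and $d^+_v=(z_v-1)/8$, and sets all other witnesses to zero. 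These two values sum to exactly one, so both the per-cell bound $d^-_v+d^+_v\le1$ and the covering constraint hold. The two witness inequalities at $v$ then have slacks $3(9-z_v)/8$ and $3(z_v-1)/8$.

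Step~3 rests on a false claim. The feasible interval for $y_e$ does not keep a length near $7/9$: it shrinks to a point when $|z_u-z_v|=8$. For a pair with $\bar y_e=1$, the largest attainable disagreement term $1-y_e$ is $\min\{1,(8+z_u-z_v)/9\}$, which vanishes at $z_u=1$, $z_v=9$. There is therefore no per-pair lower bound, and ``summing over the $324$ row pairs easily exceeds $1$'' is unsupported. The missing idea is why the terms cannot all be small at once. In a single row, list the cells $v_1,\dots,v_9$ by increasing reference digit and use only consecutive pairs; their terms telescope because the row values lie in $[1,9]$. With the paper's choice $y_{uv}=(8-z_u+z_v)/16$, the pair slacks are $7(8\pm(z_u-z_v))/16$, and these terms sum to $(64+z_{v_1}-z_{v_9})/16\ge 56/16>1$. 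With your endpoint choice, either some term equals $1$ or they sum to $(64+z_{v_1}-z_{v_9})/9\ge 56/9$. You also need two small checks. A consecutive pair stored in reverse row-major order contributes $y$ instead of $1-y$, but with the same value. All other terms of either no-good, over $E_R$ or over $E$, are nonnegative.
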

\begin{proof}
Fix $z\in B(P)$ and write $\Delta=z_u-z_v\in[-8,8]$. The permitted ordering variable satisfies
\[
 y_{uv}\in[(1-\Delta)/9,(8-\Delta)/9]\cap[0,1].
\]
The first interval has ordered endpoints, its lower endpoint is at most one, and its upper endpoint is at least zero. The intersection is nonempty. Each pair has its own variable, so completion imposes no further restriction on $z$.

For the cell-witness exclusion, choose a cell $v$ with $\bar z_v=5$, which exists in every reference grid. Set
\[
 d^-_v=(9-z_v)/8,\qquad d^+_v=(z_v-1)/8,
\]
and all other witnesses to zero. These values lie in $[0,1]$ and sum to one. At $v$, the upper and lower witness inequalities have respective slacks $3(9-z_v)/8$ and $3(z_v-1)/8$, both nonnegative. Inactive inequalities elsewhere reduce to the cell bounds. Hence every $z\in B(P)$ also has a feasible cell-witness extension.

For either ordering exclusion, take $y_{uv}=(8-z_u+z_v)/16$. These values satisfy every relaxed pair inequality. In any row, list the cells as $v_1,\ldots,v_9$ in increasing reference-digit order. The disagreement terms for consecutive cells sum to
\[
 \sum_{k=1}^{8}\frac{8+z_{v_k}-z_{v_{k+1}}}{16}
 =\frac{64+z_{v_1}-z_{v_9}}{16}\ge\frac{56}{16}>1.
\]
These terms occur in both ordering no-goods and every remaining term is nonnegative. Thus both exclusions are satisfied. The cell bounds and clue fixings give the reverse inclusion in every case. Appendix~\ref{app:math-projection} spells out the pair-inequality substitution.
\end{proof}
The raw LP relaxation therefore retains no restriction on the cell values beyond their bounds and original clues. In particular, it remains feasible even when the integer certification model is infeasible. This statement concerns the submitted relaxation before solver reductions, cuts or other strengthening. The auxiliary constraints can still lead to different deductions during the solve, so Proposition~\ref{prop:box} does not predict an LP iteration count or runtime.

These are constant-objective feasibility models. A conventional objective-value integrality gap consequently tells us little about their strength. The relevant question is which points the relaxation admits, and what restrictions remain after projecting onto the cell values.

One way to strengthen the LP relaxation is to add the $27$ unit sums $\sum_{u\in V}z_u=45$, one for each row, column and block $V$. As discussed in our earlier teaching paper \citep{williams2026}, these equalities cannot replace the all-different. For example, assigning five to every cell satisfies the sums but not the Sudoku rules. However, when added to the existing formulation, these equalities preserve every valid Sudoku completion while eliminating some fractional solutions admitted by the raw LP relaxation. Further discussion of these equalities and their limitations can be found in P\"oppel \citep{poeppel2025}.

A stronger tested variant adds, as well, the $81$ row-rank equalities
\begin{equation}
 z_{ij}=1+\sum_{t<j}y_{(i,t),(i,j)}
             +\sum_{t>j}(1-y_{(i,j),(i,t)}).\label{eq:ranks}
\end{equation}
They are valid because their right-hand side counts smaller row entries. They couple the formerly independent ordering variables to the cell values. Summing the rank equations in one row yields its sum constraint, because each ordering variable contributes $y+(1-y)=1$. Thus the row sums are redundant even in this strengthened relaxation; they are retained to keep the unit-sum block fixed across variants. The experiments distinguish the raw model, unit sums, and unit sums plus ranks.

The assignment exclusion can also be strengthened by using a simple property of completed grids: an alternative cannot differ in just one, two or three cells.
\begin{proposition}\label{prop:four}
Distinct completed Sudoku grids differ in at least four cells. Thus, replacing $80$ by $77$ in \eqref{eq:nogood} preserves the integer feasible set.
\end{proposition}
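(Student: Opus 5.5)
Let $z,z'\in\Z$ be distinct completed grids and let $S=\{(i,j):z_{ij}\ne z'_{ij}\}$ be their difference set, which is nonempty. The plan is to show $|S|\ge4$ by ruling out $|S|\in\{1,2,3\}$ through a counting argument on single rows and columns. The basic observation is that each row of $z$ and of $z'$ is a permutation of $N$. So if a row contains differing cells, the multiset of values of $z$ on those cells equals the multiset of values of $z'$ on those cells. The same holds for columns and blocks.

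The first step rules out a row meeting $S$ in exactly one cell. If row $i$ meets $S$ only at $(i,j)$, then $z_{ij}$ and $z'_{ij}$ would both equal the unique digit missing from the other eight agreeing entries of the row, so $z_{ij}=z'_{ij}$, a contradiction. Hence every row, every column and every block meets $S$ in either zero or at least two cells.

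The second step combines this with the requirement that at least one pair of differing cells must be "repaired" in two directions. Pick $(i,j)\in S$. Its row contains another cell $(i,j')\in S$ with $j'\ne j$. Its column contains another cell $(i',j)\in S$ with $i'\ne i$. These three cells are distinct. Now consider the column $j'$, which contains $(i,j')\in S$. It must contain a second cell $(i'',j')\in S$ with $i''\ne i$. This cell is distinct from $(i,j)$ and $(i',j)$ because its column $j'\ne j$. That gives four distinct cells in $S$, so $|S|\ge4$.

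For the consequence, Proposition~\ref{prop:exclude} shows that the left-hand side of \eqref{eq:nogood} counts the cells on which the candidate agrees with $\bar z$, that is, $81-|S|$ for the candidate grid. Every feasible integer grid other than $\bar z$ therefore has left-hand side at most $77$ by the first part. Meanwhile $\bar z$ itself (value $81$) is still excluded. So tightening the bound to $77$ removes no integer feasible point and keeps the same exclusion.

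I expect no real obstacle. The only care needed is checking distinctness of the four cells in the second step, which the row/column index bookkeeping above handles. Blocks are not even needed.
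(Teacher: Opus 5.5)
Your proof is correct and is essentially the paper's argument. Both rest on the same key fact, that no row or column contains exactly one changed cell. Your explicit choice of the four distinct cells $(i,j),(i,j'),(i',j),(i'',j')$ simply unwinds the paper's observation that the row--column bipartite graph of changed cells has minimum degree two, without needing its cycle lemma, and your deduction that the bound $77$ in \eqref{eq:nogood} preserves the integer feasible set matches the paper's.
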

\begin{proof}
Form a bipartite graph whose edges are changed cells and whose vertices are their rows and columns. An affected row or column cannot have exactly one changed entry, since both versions contain each digit once. Every nonisolated vertex therefore has degree at least two. A nonempty finite graph with this property contains a cycle, and a bipartite simple cycle has at least four edges. Thus a different grid agrees in at most $77$ cells.
\end{proof}

\subsection{High-level all-different model}
The CP-SAT model uses integer cell values and imposes an all-different constraint on each row, column and block, ensuring that every digit appears exactly once. The given clues are fixed directly. To test uniqueness, we exclude the reference completion \(\bar z\) using CP-SAT's forbidden-assignment constraint, which prevents the solver from returning that completed grid. CP-SAT determines how these constraints are represented and processed internally \citep{ortools}, so we do not assume that it uses any particular all-different propagation algorithm.

Note that both the assignment and natural cell-value MILP formulations use the same HiGHS solver \citep{highs}, allowing us to compare the effect of the mathematical formulation, while comparisons with CP-SAT involve both a different formulation and a different solver.

\subsection{When completion already proves uniqueness}\label{sec:singles}
A fresh exclusion solve is a well-defined certification protocol, but a sequence of forced assignments may already prove uniqueness. Every completion must respect each naked or hidden single, so a complete sequence leaves no choice for any cell. These deductions also apply to fractional assignment variables. The following proposition gives the singles special case of Provan's theorem \citep[Theorem~2]{provan2009}.
\begin{proposition}[Provan, singles special case]\label{lem:singles}
If naked and hidden singles complete a solvable puzzle, then its completion is unique. Moreover, the standard assignment model's continuous relaxation consists of that single completed assignment vector.
\end{proposition}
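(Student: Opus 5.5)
We will prove the second claim first, since it contains the first. Let $Q$ denote the polytope of the continuous relaxation of the assignment model: all $x\in[0,1]^{729}$ satisfying \eqref{eq:columns}--\eqref{eq:clues}. Fix a complete sequence of singles deductions $(i_1,j_1,k_1),\ldots,(i_m,j_m,k_m)$. Each step places digit $k_t$ in cell $(i_t,j_t)$, where $m=81-C$. We argue by induction on $t$ that every $x\in Q$ satisfies two conditions. First, $x_{i_s j_s k_s}=1$ for all $s\le t$. Second, $x_{i_s j_s k}=0$ for $k\ne k_s$, and $x_{i'j'k_s}=0$ for every peer $(i',j')$ of $(i_s,j_s)$. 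The base case is the clues: \eqref{eq:clues} fixes $x_{ij,g_{ij}}=1$. Then \eqref{eq:cells} together with nonnegativity forces the other digits in that cell to zero. Likewise, the row, column and block equalities for digit $g_{ij}$ force that digit to zero at every peer. This is exactly the candidate elimination a human performs.

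For the inductive step, consider the two types of single. For a \emph{naked single}, the cell $(i_t,j_t)$ has every digit other than $k_t$ eliminated by earlier placements. By the hypothesis, $x_{i_tj_tk}=0$ for all $k\ne k_t$, so \eqref{eq:cells} gives $x_{i_tj_tk_t}=1$. For a \emph{hidden single}, some unit $V$ (row, column or block) has digit $k_t$ eliminated from every cell except $(i_t,j_t)$. Eliminated here means either the cell is already filled by another digit, or it is a peer of a placed $k_t$. In both cases the hypothesis gives $x_{uk_t}=0$ for all $u\in V\setminus\{(i_t,j_t)\}$, and the unit equality for $V$ and $k_t$ yields $x_{i_tj_tk_t}=1$. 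The zero-propagation from this new $1$ then follows exactly as in the base case. The one point needing care is to define ``eliminated candidate'' precisely, so that each elimination used by the singles procedure corresponds to a linear consequence just derived. I expect this bookkeeping to be the main, if modest, obstacle. The fix is to state the procedure's candidate sets explicitly: a candidate is removed only because its cell is filled or because a peer holds that digit.

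After $m$ steps every cell has one $x$-entry equal to $1$ and the rest equal to $0$. So every $x\in Q$ equals a single $0/1$ vector $x^*$, and $Q\subseteq\{x^*\}$. The puzzle is solvable, so it has some integer completion, and that completion lies in $Q$. Since $Q\subseteq\{x^*\}$, it must be $x^*$, which gives $Q=\{x^*\}$. The first claim follows at once. Every integer completion is a point of $Q$, hence equals $x^*$, so the completion is unique. This argument also explains the remark before the statement, that singles deductions apply to fractional assignment variables. We are specialising \citep[Theorem~2]{provan2009} to singles, but the direct argument above is self-contained.
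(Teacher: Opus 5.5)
Your proposal is correct and follows the paper's own proof. Both argue by induction along the singles trace: nonnegativity together with the cell and unit--digit equalities forces competing assignments to zero, a naked single is closed by the cell equality, a hidden single by the unit--digit equality, and the known completion supplies a feasible point, so the relaxation is exactly that singleton. Your explicit invariant on eliminated candidates spells out the bookkeeping the paper compresses into one sentence, and the paper's synchronous rounds serialise into your sequential trace without change, since every deduction in a round uses only placements made before it.
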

\begin{proof}
A fixed assignment and nonnegativity force competing assignments in its cell and peer digit constraints to zero. A naked single then follows from the cell equality; a hidden single follows from the relevant unit--digit equality. Each such deduction fixes a variable to one in every feasible point of the continuous relaxation. Induction along the complete trace fixes all assignments. An existing completion supplies a feasible point, so the relaxation and its integer subset are both singletons.
\end{proof}
Thus a complete singles sequence both identifies the unique completion and fixes every point of the assignment LP relaxation to that completion. Adding the exclusion makes this relaxation infeasible. For such puzzles, feasibility of the assignment LP determines completion, while infeasibility of its reference-exclusion LP establishes uniqueness. This mathematical property does not prescribe the computational operations performed by a particular solver.

The contrast with Proposition~\ref{prop:box} is direct. The assignment relaxation retains these forced choices, whereas the raw natural relaxation still admits the entire clue-consistent box. Applying the same singles closure before either MILP model supplies the forced values explicitly; we measure that preprocessing separately.

The same deductions extend to generalised $n^2\times n^2$ Sudoku. A complete singles sequence fills at most $n^4$ cells, and candidate sets and unit positions can be checked in polynomial time in the grid size. Puzzles completed by these rules can therefore be completed and certified in polynomial time. This is a consequence for that restricted class, not a claim about generalised Sudoku as a whole.

\subsection{Minimum clue-deletion margin}\label{sec:margin}
We now consider a related question: given a Sudoku puzzle with a unique completion, how many clues must be removed before another completion becomes possible? We call this number the \textit{minimum clue-deletion margin}. For any competing completed grid, every original clue with which it disagrees must be removed. Minimising that number over competing grids gives
\begin{equation}
 \rho(P)=\min_{z\in\Z,\ z\ne\bar z}\ \sum_{(i,j)\in\C(P)}\ind\{z_{ij}\ne\bar z_{ij}\}.\label{eq:rho}
\end{equation}
The candidate grids in this definition need not satisfy the original clues. In the assignment model, minimise $\sum_{(i,j)\in\C(P)}(1-x_{ij,\bar z_{ij}})$ subject to the completed-grid rules and \eqref{eq:nogood}, \emph{without} imposing \eqref{eq:clues}.

The following proposition formalises this interpretation.
\begin{proposition}\label{prop:deletion-margin}
For a uniquely completable puzzle, $\rho(P)$ is the minimum number of original clues whose deletion permits an alternative completion.
\end{proposition}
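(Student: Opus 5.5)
The plan is to prove the two inequalities between $\rho(P)$ and the minimum deletion count
\[
 \mu(P)=\min\{|S| : S\subseteq\C(P),\ \text{the puzzle with clues }\C(P)\setminus S\text{ has a completion other than }\bar z\}.
\]
The starting point is the reformulation: deleting a set $S$ of clues leaves the clue set $\C(P)\setminus S$. Some completion $z\in\Z$ with $z\ne\bar z$ satisfies these reduced clues exactly when $z_{ij}=\bar z_{ij}$ for every $(i,j)\in\C(P)\setminus S$. This uses the fact that $\bar z$ satisfies every clue, so $g_{ij}=\bar z_{ij}$ on $\C(P)$. Equivalently, the disagreement set
\[
 D(z)=\{(i,j)\in\C(P):z_{ij}\ne\bar z_{ij}\}
\]
satisfies $D(z)\subseteq S$. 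Note that $\bar z$ itself remains a completion after any deletion. Hence ``permits an alternative completion'' means precisely that some $z\ne\bar z$ in $\Z$ is consistent with the remaining clues.

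For $\mu(P)\ge\rho(P)$, take any admissible $S$ and a witnessing $z\ne\bar z$. Then $D(z)\subseteq S$, so
\[
 |S|\ge|D(z)|=\sum_{(i,j)\in\C(P)}\ind\{z_{ij}\ne\bar z_{ij}\}\ge\rho(P).
\]
For $\mu(P)\le\rho(P)$, I would choose a minimiser $z^\ast$ in \eqref{eq:rho}. The minimum is attained because $\Z$ is finite, and it is nonempty since $|\Z|>1$; for example, relabelling the digits of $\bar z$ gives another grid. Setting $S=D(z^\ast)$, the grid $z^\ast$ agrees with $\bar z$, and hence with the clues, on $\C(P)\setminus S$. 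So $S$ is admissible, with $|S|=\rho(P)$.

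The only subtle point is making sure the definition in \eqref{eq:rho} ranges over all completed grids and not just those satisfying the original clues. That is exactly what allows the minimiser to serve as the witness for the reduced puzzle. I would also remark on the role of uniqueness. Because $P$ is uniquely completable, every $z\ne\bar z$ has $D(z)\neq\emptyset$, and so $\rho(P)\ge1$. This confirms that some deletion is genuinely required.

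Finally, I would note the assignment-model version stated before the proposition. The objective $\sum_{(i,j)\in\C(P)}(1-x_{ij,\bar z_{ij}})$ counts $|D(z)|$. By Proposition~\ref{prop:exclude}, the constraint \eqref{eq:nogood} restricts the feasible grids to exactly $z\ne\bar z$. The MILP therefore computes $\rho(P)$. I expect no real obstacle; the care lies only in the set-inclusion bookkeeping.
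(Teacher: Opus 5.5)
Your proposal is correct and follows the same argument as the paper. Any deletion set permitting a competing grid $z$ must contain its clue-disagreement set, which gives $|S|\ge\rho(P)$, and deleting exactly the disagreement set of a minimiser of \eqref{eq:rho} attains $\rho(P)$. Your extra remarks (that the minimum is attained because $\Z$ is finite and contains a second grid obtained by relabelling, and that uniqueness gives $\rho(P)\ge1$) fill in details the paper leaves implicit.
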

\begin{proof}
For a competing grid $z$, write $S(z)=\{(i,j)\in\C(P):z_{ij}\ne\bar z_{ij}\}$. Deleting $S(z)$ permits the competing grid $z$. Conversely, any deletion set $R$ that permits $z$ must contain $S(z)$, and hence $|R|\ge|S(z)|\ge\rho(P)$. Thus \eqref{eq:rho} gives exactly the minimum deletion size.
\end{proof}
It is also a minimum-cardinality correction-set value, with Sudoku and exclusion constraints fixed and clue equalities removable.

The optimisation formulation gives the minimum number of clues whose removal destroys uniqueness; unavoidable sets explain how clue placement determines this number.

A structural interpretation follows from the established theory of unavoidable sets \citep{mcguire2014,tjusila2024}. An \emph{unavoidable set} of a completed grid is a set of cells that cannot all be left without clues if that grid is to be determined uniquely: fixing every cell outside the set still permits an alternative completion. A minimal unavoidable set contains no smaller such set. Let $\mathcal U_{\min}(\bar z)$ denote their family.
\begin{proposition}[Unavoidable-set characterisation]\label{prop:margin}
For a puzzle $P$ with unique completion $\bar z$,
\[
 \rho(P)=\min_{U\in\mathcal U_{\min}(\bar z)}|\C(P)\cap U|.
\]
\end{proposition}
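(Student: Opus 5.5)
The plan is to prove the two inequalities separately, with the difference set $D(z)=\{(i,j)\in N\times N: z_{ij}\ne\bar z_{ij}\}$ of a competing grid $z\in\Z\setminus\{\bar z\}$ as the bridge between the two sides. The first step records the key fact: for any set $U$ of cells, $U$ is unavoidable for $\bar z$ if and only if $U\supseteq D(z)$ for some $z\in\Z$ with $z\ne\bar z$. Indeed, fixing every cell outside $U$ to its $\bar z$ value still admits an alternative completion $z$ exactly when $z$ agrees with $\bar z$ off $U$, that is, when $D(z)\subseteq U$. In particular, each difference set $D(z)$ is itself unavoidable, and every unavoidable set contains one.

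For the inequality $\rho(P)\ge\min_{U}|\C(P)\cap U|$, take a minimiser $z$ in \eqref{eq:rho}. The sum there is $|\C(P)\cap D(z)|$. Since $D(z)$ is unavoidable, it contains a minimal unavoidable set $U$: descending through unavoidable subsets terminates because the cell set is finite. Then $|\C(P)\cap U|\le|\C(P)\cap D(z)|=\rho(P)$.

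For the reverse inequality, take $U\in\mathcal U_{\min}(\bar z)$. By the key fact there is $z\ne\bar z$ in $\Z$ with $D(z)\subseteq U$. Hence the objective in \eqref{eq:rho} at $z$ is $|\C(P)\cap D(z)|\le|\C(P)\cap U|$, so $\rho(P)\le|\C(P)\cap U|$. Minimising over $U$ gives equality. Alternatively, the same conclusion follows through Proposition~\ref{prop:deletion-margin}: deleting $\C(P)\cap U$ leaves every given cell outside $U$, so an alternative completion exists.

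The main obstacle is the definitional one in the first step. The paper's informal phrase ``fixing every cell outside the set still permits an alternative completion'' must be read as fixing those cells to their values in $\bar z$, with ``alternative'' meaning a completed grid in $\Z$ other than $\bar z$. Once that reading is made precise, the equivalence with containing a difference set is immediate. One should also note that minimal unavoidable sets exist, which needs $\Z\setminus\{\bar z\}\ne\emptyset$ (there are many completed grids, for example relabellings of $\bar z$). Uniqueness of the completion of $P$ is used only to make $\rho(P)$ meaningful as in Proposition~\ref{prop:deletion-margin}; it guarantees $\rho(P)\ge1$ but is not otherwise needed for the equality.
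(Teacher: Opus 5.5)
Your proof is correct and takes the same route as the paper's. Both use the difference support of a competing grid as the bridge: it is unavoidable and so contains a minimal unavoidable set, and conversely every minimal unavoidable set carries a competing grid. The one small difference is that the paper uses minimality to assert that a minimal unavoidable set is exactly some difference support, whereas you only need $D(z)\subseteq U$, which already gives $\rho(P)\le|\C(P)\cap U|$ and in fact holds for every unavoidable $U$.
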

The proof is given in Appendix~\ref{app:math-margin}. Each competing grid differs from $\bar z$ on an unavoidable set, and every minimal unavoidable set is realised by such a difference. The margin is therefore determined by the \emph{least-covered minimal unavoidable set}. Total clue count alone does not describe this coverage.

Uniqueness implies $\rho(P)\ge1$. It should be noted that a margin of one means that there exists at least one clue whose removal destroys uniqueness, whereas minimality requires that removing any single clue destroys uniqueness. Thus, every minimal puzzle has margin one, but the converse is not necessarily true.
\begin{corollary}\label{cor:rho}
If $P'$ adds one correct clue to $P$, then
\[
 \rho(P)\le\rho(P')\le\rho(P)+1.
\]
The margin is invariant under every Sudoku-preserving symmetry applied simultaneously to the clues and reference grid.
\end{corollary}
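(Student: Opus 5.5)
The plan is to work directly from the defining formula~\eqref{eq:rho}, not from the unavoidable-set characterisation. The key point is that the reference grid is the same for $P$ and $P'$, so the two minimisations in~\eqref{eq:rho} range over the same competitor set. Since $P'$ adds a \emph{correct} clue, it is still uniquely completable with completion $\bar z$. So $\rho(P')$ is defined by the same minimisation over $z\in\Z$, $z\ne\bar z$, with the clue set enlarged from $\C(P)$ to $\C(P')=\C(P)\cup\{c\}$ for a single new position $c$.

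For each fixed competitor $z$, I will write $f_P(z)=\sum_{(i,j)\in\C(P)}\ind\{z_{ij}\ne\bar z_{ij}\}$. Then
\[
 f_{P'}(z)=f_P(z)+\ind\{z_c\ne\bar z_c\},
\]
which gives $f_P(z)\le f_{P'}(z)\le f_P(z)+1$ pointwise. Taking the minimum over the common set $\{z\in\Z:z\ne\bar z\}$ preserves both inequalities. The minimum of a pointwise larger function is at least as large, and the minimum of $f_P+1$ equals $\rho(P)+1$. This yields $\rho(P)\le\rho(P')\le\rho(P)+1$. One could alternatively phrase this through Proposition~\ref{prop:deletion-margin}: deleting the new clue together with an optimal deletion set for $P$ gives the upper bound, and restricting an optimal deletion set for $P'$ gives the lower bound. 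The formula-level argument is cleaner, so I will use that.

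For invariance, let $\sigma$ be a Sudoku-preserving symmetry. This means a composition of a digit relabelling $\pi$ and a cell permutation $\tau$ generated by band/stack permutations, row/column permutations within bands and stacks, and transposition. Such a $\sigma$ maps $\Z$ bijectively onto $\Z$, and it maps the clue set $\C(P)$ to $\tau(\C(P))$ with relabelled givens. Applied simultaneously to the reference grid, it sends $\bar z$ to $\sigma\bar z$. Then $z\mapsto\sigma z$ is a bijection from competitors of $\bar z$ to competitors of $\sigma\bar z$.

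The disagreement indicator is preserved. We have $(\sigma z)_{\tau(u)}\ne(\sigma\bar z)_{\tau(u)}$ if and only if $\pi(z_u)\ne\pi(\bar z_u)$, which holds if and only if $z_u\ne\bar z_u$, because $\pi$ is injective. Summing over $u\in\C(P)$ shows $f_{\sigma P}(\sigma z)=f_P(z)$, so the two minima coincide. The only point needing care is confirming that the symmetry group indeed maps completed grids to completed grids, and that the transformed puzzle has $\sigma\bar z$ as its unique completion. Both follow because the symmetries permute rows, columns and blocks among themselves and preserve the all-different structure. I expect no real obstacle: the argument is essentially bookkeeping, and the main thing to state explicitly is that both minimisations share the same feasible set.
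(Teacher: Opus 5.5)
Your proposal is correct and follows the paper's proof. The paper also observes that each competitor's objective in the defining minimisation rises by zero or one when the clue is added, and takes minima over the unchanged competitor set. For invariance it uses the same bijection of completed grids, which preserves difference supports and their intersection with the transformed clue set.
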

\begin{proof}
Every competing grid's objective increases by either zero or one when the clue is added; taking minima gives both inequalities. Such a symmetry bijects completed grids and their difference supports, preserving the cardinality of the intersection with the transformed clue set.
\end{proof}
These properties concern the structural robustness of uniqueness. They imply no monotonic relationship between the margin and computational effort. The margin counts disagreements at clue positions, rather than numerical differences between arbitrary digit labels; Appendix~\ref{app:math-margin} makes this distinction precise.


\section{Structural features}\label{sec:features}
To investigate why puzzles with the same number of clues can require different computational effort, we examine how the clues are distributed and what can be deduced directly from them. These structural features are computed independently of the experimental solvers. The main models use the original clues, while the separately tested propagation variants also incorporate deductions from naked and hidden singles.

Let $r_i$, $c_j$ and $b_t$ denote the number of clues in row $i$, column $j$ and block $t$, respectively. Let $d_k$ count how many times digit $k$ appears among the clues. For each of these four count vectors, we calculate the minimum, maximum, range, population variance and entropy. Since each digit can appear at most nine times in a completed Sudoku grid, every entry in these vectors is between zero and nine. We calculate entropy as
$$
-\sum_{v_a>0}(v_a/C)\log(v_a/C)
$$
where $v_a$ denotes an entry in the corresponding count vector and $C=|\C(P)|$ is the total number of clues.

For a row vector, write $I_R=\sum_i(r_i-C/9)^2$, with analogous column and block quantities. Note that $I_R$ measures how unevenly the clues are distributed across the rows, with larger values indicating greater imbalance. However, the possible values of this imbalance measure depend on the clue count, because no row can contain more than nine clues. We therefore scale it using its smallest and largest possible values at each clue count.

For $C=9q+s$, $0\le s<9$, the exact extrema over integer count vectors are
\[
 I_{\max}(C)=81q+s^2-C^2/9,\qquad I_{\min}(C)=s(9-s)/9.
\]
The maximum fills units to capacity before starting another, while the minimum distributes clues as evenly as possible. Appendix~\ref{app:math-imbalance} gives an exchange proof. 

Define the normalised imbalance index
\[
 J_R=\frac{I_R-I_{\min}(C)}{I_{\max}(C)-I_{\min}(C)}
\]
when $I_{\max}(C) \ne I_{\min}$. If instead $I_{\max}(C)=I_{\min}$, then we set $J_R = 0$ and record this degenerate case separately. Define $J_C,J_B,J_D$ similarly and 
$$
J_{\mathrm{grid}}=\frac{J_R+J_C+J_B}3.
$$ 
It should be noted that $J_R$, $J_C$, and $J_B$ measure clue-count imbalance across rows, columns, and blocks, respectively, while $J_D$ measures imbalance in the frequencies of the nine given digits. Each index is in $[0,1]$ over bounded integer count vectors; simultaneous attainment or attainment by unique puzzles is not assumed. At eighty clues the distribution is forced, and the flag avoids calling its unavoidable imbalance unusually uneven. Counts of ``nearly complete'' units are also recorded, but are not all entered into regression.

For each initially empty cell, $S_{ij}$ contains digits absent from its given peers. These are \emph{direct-elimination} candidates, before iterative propagation. We record their mean and maximum size, size frequencies, total mass $M=\sum|S_{ij}|$, and log-ambiguity $H_0=\sum\log|S_{ij}|$. The last quantity is a descriptive sum of log-domain sizes, not the Shannon entropy of the dependent solution space. The compact regression uses $H_0/(81-C)$.

Propagation repeatedly applies the naked and hidden singles introduced in Section~1. Each round recomputes candidates from the current grid, collects all forced cell values from the same state, and applies them simultaneously. The process stops at a fixed point. We record initially forced cells, rounds, newly filled cells, the proportion of original blanks filled, unresolved cells and remaining candidate mass. Only these two rules are used; Appendix~\ref{app:features} specifies the procedure step by step.

The residual graph has unresolved cells as vertices and edges between cells sharing a row, column or block. The feature table includes vertices, edges, mean and maximum degree, components and largest component. When no cells remain unresolved, all these graph summaries are set to zero. The final compact model uses unresolved-cell count; graph order is the same quantity, and several other graph summaries largely track the number of unresolved cells. These summaries are recorded for description; the regression uses the smaller interpretable set.


\section{Computational methodology}\label{sec:method}
\subsection{Execution and validation}
Three puzzle collections serve different purposes. The \NInstances-puzzle benchmark balances every exact clue count and supports the main descriptive and predictive analyses. The \ExternalN-puzzle Norvig collection provides an additional validation check after removing puzzles equivalent under Sudoku symmetries. The fixed \HardN-puzzle Tdoku sample is drawn from a historically highly rated collection to investigate the relationship between completion and certification effort on puzzles where the more efficient models record nonzero search counters. The latter two collections are neither balanced by clue count nor representative of all Sudoku puzzles.

Table~\ref{tab:design} summarises the full benchmark alongside the smaller diagnostic and validation experiments. These additional experiments investigate specific aspects of solver behaviour, rather than providing results representative of the full benchmark. For each experiment comparing completion and uniqueness certification, both tasks are performed on the same puzzles. The minimum clue-deletion margin is evaluated separately as an optimisation problem.

\begin{table}[H]\centering\fontsize{8.5}{10.5}\selectfont\setlength{\tabcolsep}{3pt}
\caption{Experimental coverage. Note that ``instances'' includes relabelled and nested puzzles, which are not necessarily independent. ``Calls'' counts retained solver runs, excluding abandoned recovery attempts. Diagnostic experiments use a ten-second limit, with a five-second limit for the nested raw-natural runs and a sixty-second limit for the matched cap extension and opening presolve checks. Pilot runs are excluded, and margin optimisation requires one call per instance. \vspace{2.0mm}}\label{tab:design}
\begin{adjustbox}{max width=\linewidth}
\begin{tabular}{>{\raggedright\arraybackslash}p{110pt}>{\raggedright\arraybackslash}p{105pt}rrr}
\toprule
Experiment & Selection & Instances & Calls & Finished \\
\midrule
Assignment and CP-SAT & Exact-count benchmark & 32,000 & 128,000 & 128,000 \\
Raw natural & Exact-count benchmark & 32,000 & 64,000 & 64,000 \\
Formulation diagnostics & Random selection + cases & 90 & 1,620 & 1,597 \\
Digit relabelling & Selected base puzzles & 252 & 1,512 & 1,473 \\
External validation & Norvig lists, deduplicated & 105 & 630 & 619 \\
Harder external validation & Fixed Tdoku sample & 64 & 768 & 767 \\
Clue-deletion margin & Stratified benchmark subset & 320 & 320 & 320 \\
Nested: assignment and CP & Shared unique bases & 2,560 & 10,240 & 10,240 \\
Nested: raw natural & Same derived sequences & 2,560 & 5,120 & 5,034 \\
Presolve on/off & Benchmark + harder sample & \PreAllN & \PrePrimaryCalls & \PrePrimaryFinished \\
Matched cap extension & Any unfinished puzzle & \PreExtendedN & \PreExtendedCalls & \PreExtendedFinished \\
Opening presolve cases & Figure~\ref{fig:opening} & \PreFigureCases & \PreFigureCalls & \PreFigureFinished \\
\bottomrule
\end{tabular}
\end{adjustbox}
\end{table}

The primary campaigns use direct Python interfaces to HiGHS 1.15.1 or OR-Tools 9.15.6755 on an Apple M4 machine with 16 GB memory and macOS 27. Each solver uses one thread and seed zero in the primary runs. Standard presolve and solver heuristics remain enabled in the primary campaigns. The matched sensitivity experiment varies the HiGHS presolve option alone, as described in Appendix \ref{app:presolve}. HiGHS uses feasibility tolerances of \(10^{-7}\) and zero requested absolute and relative MIP gaps. Completion receives only the givens, with no archived-solution hint. Certification is a fresh solve receiving the givens and the known reference, without a retained basis, search state or completion warm start. The CP run stops after its first solution when one exists.

A pilot sampled one puzzle at every count before the full campaign. Assignment and CP-SAT then received a ten-second limit per task. We first ran the raw natural model with a one-second limit, alongside a separate ten-per-count subset with a five-second limit. Unfinished full-screen calls were restarted from scratch with limits of ten, sixty and, where needed, three hundred seconds. The primary analysis uses the first run that finishes. Work from earlier unsuccessful attempts is recorded separately and is not included in that run's effort count. If no stage finishes, the outcome remains unresolved; neither its time limit nor its partial counter is treated as completed effort. Appendix~\ref{app:recovery} records the stage coverage and recovery sensitivity.

Independent puzzle solves run concurrently, while each solver remains single-threaded. The worker caps and execution records are given in Appendix~\ref{app:solver}. We use counters as the primary outcomes because wall times and timeout status can also depend on machine load. Repeated serial batches separately compare feature extraction, singles preprocessing and CP solve cost.

The formulation diagnostic sample consists of five randomly selected puzzles at each count from seventeen through thirty, and at forty, sixty and eighty clues. We add the illustrative cases and one nested pair selected because the original run recorded a large increase. This is a diagnostic, deliberately enriched sample; its aggregate proportions are not estimates for the full benchmark. Each instance receives the raw, unit-sum and rank-strengthened natural models, both ordering exclusions, both assignment exclusions, and the same singles closure before each original MILP. Every variant has the same ten-second cap. Preprocessing time is recorded separately. A standalone presolve inspection uses a separate copy and the settings in Appendix~\ref{app:solver}; it is not charged to, or reused by, the measured solve.

For digit sensitivity we select two puzzles in each count--residual class at counts seventeen, twenty and twenty-four, separating puzzles that singles complete from those with cells still unresolved. Each base has its identity labelling and twenty fixed random digit permutations. Givens and reference are relabelled together; the solver seed remains zero. We check feature invariance and compare raw natural, row-order exclusion and assignment models. All data-dependent example selection is disclosed and kept separate from the random selections.

We additionally use Norvig's public \texttt{top95} and \texttt{hardest} lists \citep{norvig2006}. An independent integer enumerator checks uniqueness. A coloured incidence-graph isomorphism test checks exact equivalence under standard Sudoku symmetries; its construction and justification are in Appendix~\ref{app:symmetry}. We check against the full benchmark and remove one symmetry duplicate within the external lists. This separately sourced, curated collection receives the same two tasks and ten-second caps under assignment MILP, raw natural MILP and CP-SAT.

To obtain more observations with nonzero search counters, we freeze a uniform sample of \HardN\ source lines from the \HardSourceN-record forum-hardest 11+ file distributed with Tdoku \citep{tdoku}. Its historical Sudoku Explainer rating is a source-selection criterion, not our computational outcome. Selection uses seed 20261007 before any runs of our models. An independent integer enumerator verifies every selected puzzle, and exact symmetry screening finds no overlap with the benchmark, Norvig collection or other selected records. The full fixed sample receives assignment, CP-SAT, raw natural, common-singles natural, rank-strengthened and row-order models, each with the same ten-second cap and four single-threaded workers. No puzzle is removed or replaced because of a solver outcome. Appendix~\ref{app:validation} specifies the source selection.

Every returned completion is checked against the givens, every unit and the archived solution. A second completion in a benchmark certification run is an error, and a timeout never establishes uniqueness. Independent integer enumeration and CP margin checks complement the formulation tests; Appendix~\ref{app:validation} gives their coverage and explains the checks on returned solutions and recorded results.

\subsection{Effort outcomes and statistical analysis}
HiGHS records processed branch-and-bound nodes, LP iterations, solver and external elapsed time, model dimensions, available presolve reductions, status, bounds and residual statistics. The LP counter sums iterations reported across the solve, including those attributed to separation and heuristics. It counts iterations, not how many LP problems were solved, and does not measure all work done in presolve or propagation. A zero LP count therefore does not imply zero solver effort. Root objectives in a zero-objective feasibility model do not provide a useful hardness scale. A processed-node count of one can describe the root alone; a positive node counter is not by itself evidence of branching. No deterministic HiGHS work-unit counter is available through this interface. CP-SAT records branches, conflicts, LP iterations and its reported deterministic time. Deterministic time is an implementation-specific work proxy, not elapsed seconds or a common unit shared with HiGHS.

The primary distribution plots retain zeros and unresolved outcomes. Numerical modelling uses $\log(1+D)$ so that zero counts remain defined. Rank correlations are undefined when either input is constant.

A high overall correlation can arise partly because both tasks record zero LP iterations on many puzzles, or because both counters decrease as clue count increases. We therefore examine the relationship after excluding pairs with shared zeros and after comparing puzzles with the same clue count. We report both pairs with at least one positive counter and pairs with both counters positive, alongside the full population and generation-regime comparisons.

For the within-count comparison, we rank puzzles separately at each exact clue count, subtract the mean rank in each group, and correlate the centred midranks. Larger groups with more rank variation contribute more to this pooled statistic; it is not an equally weighted average of the separate Spearman correlations. Intervals resample puzzle pairs within clue count and recompute the ranks. To compare two models' task associations, each bootstrap draw uses the same puzzle IDs for both models and subtracts their coefficients. This paired contrast is computed separately for ordinary and within-count correlations. Appendix~\ref{app:prediction} specifies the resampling, and Appendix~\ref{app:comparisons} gives the supporting comparisons.

A correlation describes whether the two tasks tend to rank puzzles similarly; it does not tell us whether certification usually requires more iterations. We also report the paired difference $\Delta=\log(1+D_{\mathrm{cert}})-\log(1+D_{\mathrm{find}})$, its quantiles, and the numbers of cheaper, equal and more expensive certification runs. For pairs positive in both tasks, we additionally report the median of the individual ratios $D_{\mathrm{cert}}/D_{\mathrm{find}}$. A ratio of marginal medians is not this paired statistic.

We use a logistic model to predict whether a solve records any LP iterations, including unfinished runs whose recorded work is already positive. Regressions then predict $\log(1+D)$, evaluated separately across all finished runs and among runs with positive counts. The positive-only scores assume that positive work has already been observed; they do not assess a combined predictor for a new puzzle whose zero/positive status is unknown. Each model has a separate baseline $\alpha_C$ for every clue count, formally a count-specific intercept. We then add clue distribution, candidate ambiguity and the number $U$ of cells left unresolved after singles:
\begin{align*}
 M_0 &: \alpha_C,\\
 M_{\mathrm{grid}} &: \alpha_C+\beta_G J_{\mathrm{grid}},\qquad
 M_{\mathrm{digit}} : \alpha_C+\beta_D J_D,\\
 M_1 &: \alpha_C+\beta_G J_{\mathrm{grid}}+\beta_D J_D,\\
 M_2 &: M_1+\beta_H H_0/(81-C),\\
 M_3 &: M_2+\beta_U U.
\end{align*}
The separate grid and digit models distinguish clue placement from digit balance. Numerical coefficients are regularised within each training fold, while regression baselines for clue count are unpenalised. Appendix~\ref{app:prediction} specifies the regression and logistic penalties and reports all retained populations.

Five fixed folds are stratified by exact clue count, and all feature blocks use the same splits. A regression population retains only counts with at least five eligible observations. We report held-out $R^2$, mean absolute error (MAE), and paired changes from $M_0$. Their intervals resample the held-out predictions within clue count. They measure evaluation-sample uncertainty conditional on the fitted folds, rather than uncertainty from refitting or transfer to a new population. The symmetry audit in Appendix~\ref{app:symmetry} finds no equivalent benchmark pair; related generators can still create other dependence.

Linear predictions of a nonnegative log-effort outcome can be negative. Alongside the original fits, we evaluate $\max(0,\widehat y)$ on exactly the same held-out predictions, without refitting or changing folds. Projection onto the nonnegative half-line cannot increase absolute or squared error when the observed outcome is nonnegative. This checks whether extrapolation below zero explains the reported MAE trade-off.

For CP deterministic work $d$, the outcome is $\log(1+10^6d)$. We compare adding $\ind\{U>0\}$ with adding $U$, repeat the fit within $U>0$, and assess their held-out scores. The scaling is part of the defined outcome, not a conversion to wall-clock seconds. For the minimum clue-deletion margin we retain the fixed original sample and its exact optima, and refit $M_4=M_3+\beta_\rho\rho(P)$ on the common eligible records using the same unpenalised count intercepts. Natural-model fits use the recovered certification outcomes. The analysis first checks whether margin variation occurs within counts where the effort outcome varies; without that overlap, a fitted coefficient cannot show whether the margin distinguishes positive LP effort beyond clue count.

\subsection{Matched clue additions}
We select twenty unique $17$-clue base puzzles and generate two seeded random orders of their blank cells. Each sequence records every puzzle from $17$ to $80$ clues, adding the archived value at one cell per step. Adding a correct clue preserves uniqueness: the new completion set is a subset of the old singleton and still contains $\bar z$. Every base puzzle is checked afresh by independent enumeration. These derived instances are stored separately from the benchmark.

We measure the three formulations on these sequences, using the primary assignment/CP limits and a five-second natural-model limit. Differences $\Delta D_C=D(P_{C+1})-D(P_C)$ use adjacent completed pairs only. Unresolved pairs are counted separately. The two orders from one base share many clues and the same solution; bootstrap intervals for increase rates resample base puzzles, not individual steps. The integer completion set remains exactly the same singleton throughout. Its representation and relaxation change, and no monotonicity of a solver's operations follows. The experiment tests that behaviour empirically.


\section{Results}\label{sec:results}
\subsection{Clue count, variation and zero search counters}
All main-benchmark completion and certification runs finish. Staged recovery resolves the \InitialUnfinished\ task calls left unfinished by the original screen, giving \RecoveredFindN\ completion results and \RecoveredCertN\ infeasibility results for the raw natural model. The results therefore cover every benchmark instance, including the low-count tail omitted by a completed-only analysis of the initial screen. Appendix~\ref{app:recovery} reports coverage at each recovery stage.

Figure~\ref{fig:distribution} shows the conditional effort distribution at every clue count. With presolve enabled, the raw natural model records positive LP work on \PositiveFindN\ completion calls and \PositiveCertN\ certification calls. All runs from thirty-one through eighty clues report zero LP iterations. Positive LP counts are therefore concentrated in a small part of the clue-count range. An average across all puzzles would hide both the large counts in that range and the many zero counts elsewhere.

\begin{figure}[ht!]\centering\includegraphics[width=0.85\linewidth]{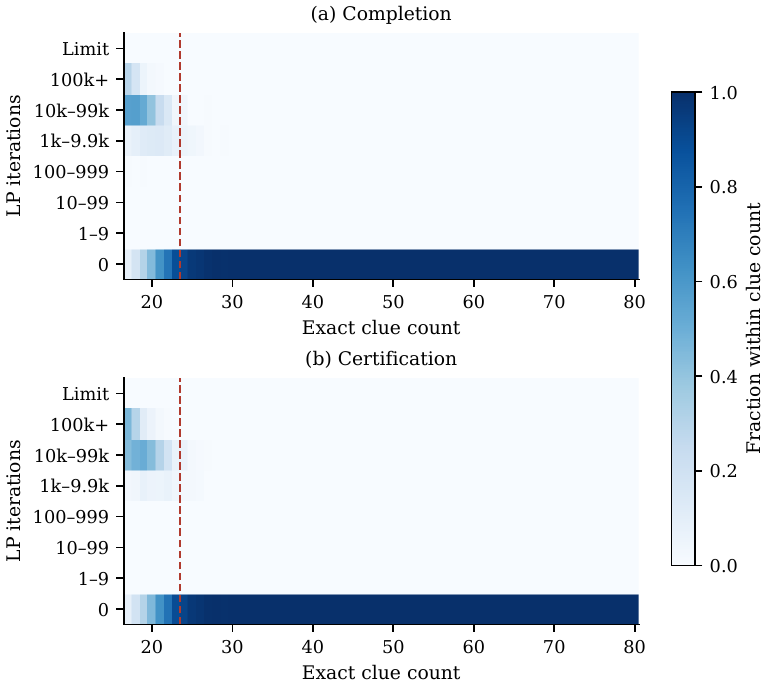}
\caption{Full conditional distributions of raw natural-model LP effort. Panels (a) and (b) show completion and certification. Each column divides all calls at one exact clue count into the displayed effort bins. The limit row is empty after recovery. The dashed line marks the change in the source generator; it is not an estimated difficulty threshold.}\label{fig:distribution}
\end{figure}
\begin{figure}[ht!]\centering\includegraphics[width=0.85\linewidth]{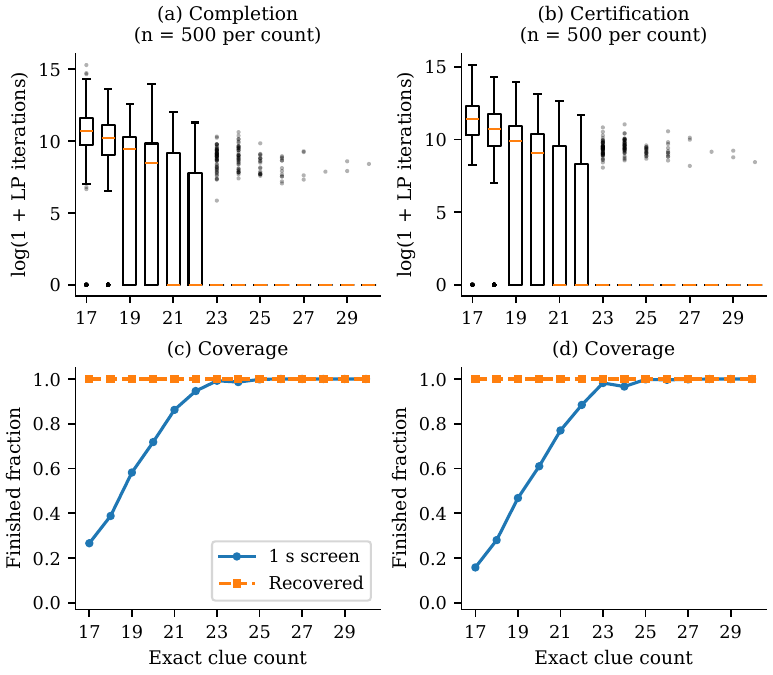}
\caption{The active count range after recovery. Panels (a) and (b) show boxplots of $\log(1+\mathrm{LP})$, with finished sample sizes stated in each panel. Whiskers extend to the most extreme observations within $1.5$ interquartile ranges; points beyond are shown individually. Panels (c) and (d) compare coverage in the one-second screen and recovered population. The recovered boxes include every instance in each stratum.}\label{fig:lowcounts}
\end{figure}

Consider the seventeen-clue group in Figure~\ref{fig:lowcounts}. Median LP effort is \SeventeenFindMedian\ for completion and \SeventeenCertMedian\ for certification, while the maxima are \SeventeenFindMaximum\ and \SeventeenCertMaximum. The same group also contains \SeventeenFindZero\ zero-LP completion calls and \SeventeenCertZero\ zero-LP certification calls. Exact clue count thus leaves substantial variation under this formulation. The longer runs matter here because summaries based only on finished runs from the initial one-second screen omitted the slower cases.

With presolve enabled, the assignment model records zero branch-and-bound nodes and zero LP iterations for both completion and certification across the entire benchmark. Similarly, CP-SAT records zero branches for both tasks. These counters therefore cannot distinguish the computational difficulty of the puzzles. However, zero counts do not mean that no computation takes place: the solver logs show reductions, propagation and, in some CP-SAT runs, probing. For example, the assignment completion model for \texttt{SUD-18-0268} is reduced to an empty feasible model during presolve, while its certification model is found infeasible during presolve. The corresponding CP-SAT logs also show probing despite recording zero branches.

Elementary singles complete \SinglesCompleteN\ puzzles. Note that Proposition~\ref{lem:singles} establishes exactness of the assignment LP relaxation for that subset, but does not explain every solver reduction on the remaining puzzles. Conversely, even among singles-complete puzzles, the raw natural model records positive LP work on \SinglesPositiveFind\ completion calls and \SinglesPositiveCert\ certification calls. A forced solution does not guarantee that every representation exposes the deductions efficiently.

\subsection{Completion and certification: concordance and exceptions}
For the raw natural model, puzzles requiring more LP iterations to complete generally also require more iterations to certify. This relationship remains strong after excluding pairs with shared zeros and comparing puzzles with the same number of clues.

Following the recovery of unfinished runs, completion and certification effort have a Spearman correlation of \RecoveredAllR\ across all \RecoveredAllN\ puzzles. However, this includes \BothZeroN\ pairs where both tasks record zero LP iterations. Excluding these pairs gives a correlation of \RecoveredActiveR\ for the \RecoveredActiveN\ puzzles with at least one positive counter, and \RecoveredBothR\ for the \RecoveredBothN\ puzzles with positive counters in both tasks. The corresponding count-stratified bootstrap intervals are $[\RecoveredActiveLow,\RecoveredActiveHigh]$ and $[\RecoveredBothLow,\RecoveredBothHigh]$. Figure~\ref{fig:tasks} illustrates these two groups.

For puzzles with positive LP iterations in both tasks, the correlation of centred within-count ranks is \ConditionalPositiveLPR\, with interval $[\ConditionalPositiveLPLow,\ConditionalPositiveLPHigh]$. This shows that the association persists even when comparing puzzles with the same number of clues. Using branch-and-bound nodes instead gives a correlation of \ConditionalPositiveNodesR\ across \ConditionalPositiveNodesN\ pairs with positive node counts in both tasks. Table~\ref{tab:conditionalregimes} provides comparisons across the two generation procedures, although the higher clue-count range contains relatively few positive pairs, leading to greater uncertainty.

\begin{figure}[ht!]\centering\includegraphics[width=0.8\linewidth]{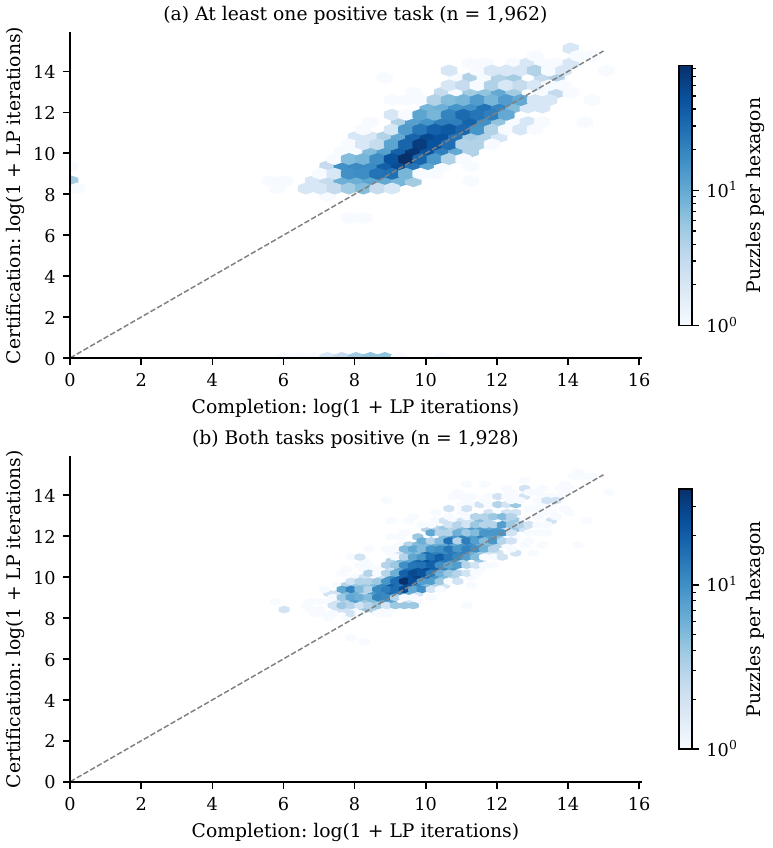}
\caption{Recovered raw natural-model completion and certification effort after excluding pairs with both LP counts zero. The colour bars give puzzles per hexagon on a logarithmic scale, with darker cells containing more puzzles; the dashed diagonal marks equal LP counts. Panel (a) includes task-discordant zero/positive pairs. Panel (b) retains only pairs positive in both tasks.}\label{fig:tasks}
\end{figure}

It turns out that certification is more expensive on \CertDearerN\ of the \RecoveredActiveN\ pairs with at least one positive LP counter, and cheaper on \CertCheaperN. The median paired log difference is \TaskDeltaMedian, with interquartile range $[\TaskDeltaQOne,\TaskDeltaQThree]$. Among the \RecoveredBothN\ pairs positive in both tasks, the median individual certification/completion ratio is \PairedPositiveRatio. Thus the positive association coexists with a typical certification premium and with exceptions in both directions. Table~\ref{tab:app-review_paired_cost} gives the paired cost summary.\par

Strong task association still allows individual puzzles to behave differently: \FindOnlyN\ puzzles have positive completion LP counts and zero certification LP counts, while \CertOnlyN\ show the reverse. The exclusion representation matters for interpreting such cases. Table~\ref{tab:cases} shows two opposite examples. For \texttt{SUD-20-0101}, reusing row-order variables reduces the certification LP count to zero. For \texttt{SUD-18-0268}, the same change introduces positive certification LP counts where the cell-witness version had none. Thus a task contrast can change without changing the puzzle, solver or excluded integer grid.\par

\begin{table}[!htbp]\centering\small
\caption{Selected cases: LP iterations in fresh diagnostic runs. $U$ is the number of cells unresolved by singles. ``Row'' changes only the certification exclusion; ``Rank'' adds unit sums and row-rank equalities to the cell-witness model. All displayed calls finished.}\label{tab:cases}
\begin{adjustbox}{max width=\linewidth}
\begin{tabular}{lrrrrr}
\toprule
Instance & $U$ & Raw find & Raw certify & Row certify & Rank certify \\
\midrule
SUD-17-0213 & 0 & 8,404 & 16,367 & 26,357 & 0 \\
SUD-20-0101 & 0 & 0 & 6,462 & 0 & 0 \\
SUD-18-0268 & 50 & 12,564 & 0 & 14,783 & 0 \\
\bottomrule
\end{tabular}
\end{adjustbox}
\end{table}

\subsection{Task concordance on harder external puzzles}
The separate validation collection contains \ExternalN\ unique puzzles after removing duplicates under Sudoku symmetries, with no equivalent puzzles found in the main benchmark. Both the assignment model and CP-SAT complete all runs, recording nonzero search counters on the same two puzzles. For the assignment model, completion processes only one branch-and-bound node in each case, which may represent the root node, whereas certification processes more than one. CP-SAT also records positive branch counts on these puzzles, although its branches measure a different type of solver operation. Under the same ten-second time limit, the raw natural model completes \ExternalNaturalFindDone\ completion runs and \ExternalNaturalCertDone\ certification runs.

The larger number of zero counters in the Norvig collection makes it difficult to compare how completion and certification effort are related. The harder sample provides a clearer comparison, with little correlation between the two tasks under assignment LP iterations, moderate correlation under raw natural LP iterations, and strong correlation under CP-SAT branches. These correlations describe the relationship between completion and certification within each model, rather than comparing computational effort directly across solvers.

The \HardN\ puzzles in the harder sample have between \HardMinClues\ and \HardMaxClues\ clues, and none can be completed using only naked and hidden singles. Table~\ref{tab:hard} reports the main correlations and their uncertainty, while Appendix~\ref{app:external} provides the clue-count frequencies, model coverage and additional comparisons.

\begin{table}[H]\centering\fontsize{8.5}{10.5}\selectfont\setlength{\tabcolsep}{3.5pt}
\caption{Completion--certification association on all \HardN\ harder-sample puzzles; every listed model finishes both tasks. Ordinary Spearman correlations and centred within-count rank correlations use the same task-specific counter within a row. The 95\% percentile intervals resample puzzle pairs within count. Counters across solver paradigms are not commensurable.\vspace{2.0mm}}\label{tab:hard}
\begin{adjustbox}{max width=\linewidth}
\begin{tabular}{llrrrr}
\toprule
Model & Counter & $r_S$ & 95\% interval & $r_{\mathrm{within}}$ & 95\% interval \\
\midrule
Assignment & LP & 0.031 & [-0.251, 0.273] & -0.106 & [-0.427, 0.225] \\
Raw natural & LP & 0.404 & [0.192, 0.572] & 0.265 & [-0.067, 0.527] \\
CP-SAT & Branches & 0.882 & [0.797, 0.930] & 0.831 & [0.647, 0.901] \\
\bottomrule
\end{tabular}
\end{adjustbox}
\end{table}

In the harder sample, assignment records positive LP work on \HardAssignmentFindPositive\ completion runs and all certification runs, and CP records positive branches throughout. Raw natural and rank-strengthened models also finish both tasks on every puzzle. Only row-order certification leaves an unresolved run. No elementary single is available initially, so common singles preprocessing changes no puzzle and its completed LP counters coincide with the raw model. Table~\ref{tab:hardcounts} gives the exact-count frequencies and positive assignment-completion counts.

Figure~\ref{fig:hardtasks} shows what the different correlations mean. A puzzle with a large completion count need not have a correspondingly large certification count in panel (a); the two counters track one another much more closely in panel (c). The panels use different counters and scales, so their vertical positions do not compare effort between solvers. Restricting assignment to the \SubmissionAssignmentPositiveN\ pairs positive in both tasks gives a correlation of \SubmissionAssignmentPositiveR, with interval $[\SubmissionAssignmentPositiveLow,\SubmissionAssignmentPositiveHigh]$. The completion zeros alone therefore do not account for its weak observed association.

After centring ranks within exact clue counts, CP retains a strong positive association. The assignment estimate remains weak and uncertain; the raw-natural estimate is positive but its interval also includes zero. Table~\ref{tab:hard} gives these adjusted estimates and intervals alongside the ordinary coefficients. The unequal count strata supply unequal information to the pooled within-count statistic.

\begin{figure}[ht!]\centering\includegraphics[width=0.8\linewidth]{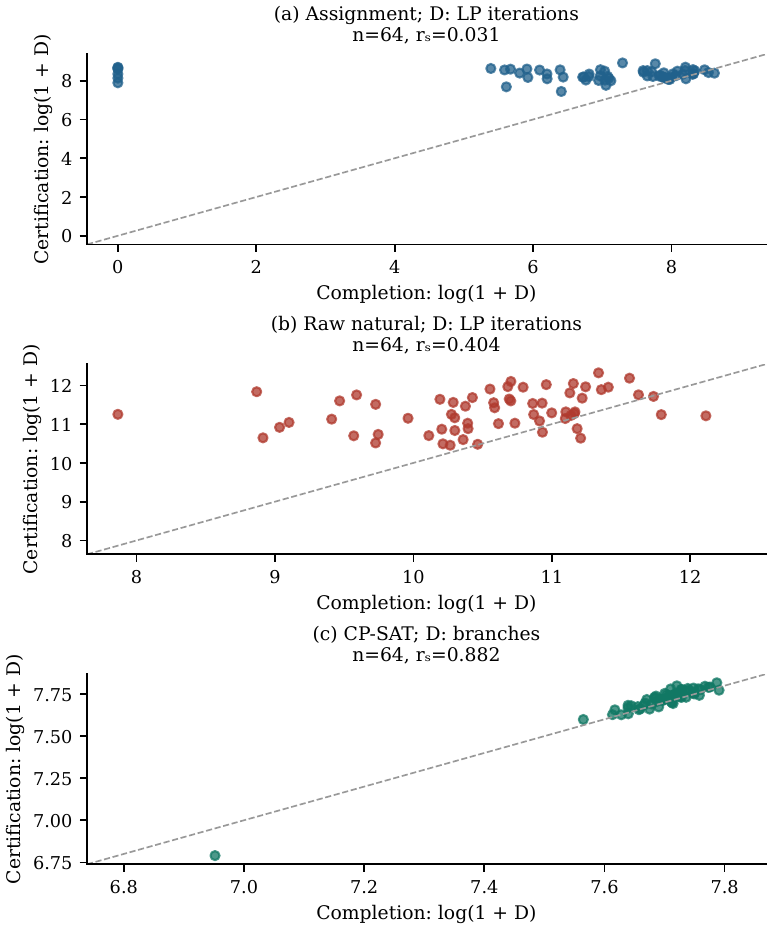}
\caption{Both tasks on every puzzle in the fixed harder sample. Each point is a puzzle; axes show $\log(1+D)$ with the indicated counter. The dashed line marks equal counters within a model. Panels (a), (b) and (c) show assignment, raw natural and CP-SAT, respectively, and have separate scales. Displayed coefficients are ordinary Spearman correlations. These describe task rankings within each model, not equivalence between MILP iterations and CP branches.}\label{fig:hardtasks}
\end{figure}

Paired bootstrap contrasts compare the models on identical resamples. The ordinary natural-minus-assignment correlation difference is \SubmissionNaturalAssignmentOrdinaryDifference\, with interval $[\SubmissionNaturalAssignmentOrdinaryLow,\SubmissionNaturalAssignmentOrdinaryHigh]$. This comparison holds HiGHS and the LP counter fixed. The corresponding within-count difference is \SubmissionNaturalAssignmentWithinDifference\, with interval $[\SubmissionNaturalAssignmentWithinLow,\SubmissionNaturalAssignmentWithinHigh]$, so that adjusted separation is not resolved by this sample. CP-minus-MILP contrasts remain positive in both analyses; their full intervals are in Table~\ref{tab:app-submission_hard_contrasts}. A near-zero rank correlation does not establish statistical independence.

The counter check narrows the interpretation further. Ordinary task correlations using processed MILP nodes are \SubmissionCounterAssignmentR\ for assignment and \SubmissionCounterNaturalR\ for raw natural, broadly resembling their LP-counter results. CP deterministic work instead gives \SubmissionCounterCPR\, with within-count value \SubmissionCounterCPWithin. Thus its particularly strong branch concordance is not a counter-independent property. Table~\ref{tab:app-submission_hard_counters} gives the corresponding intervals.\par

Cross-formulation rankings also vary: assignment LP effort and CP branches have rank correlations \HardCrossFindR\ for completion and \HardCrossCertR\ for certification. The association between the two tasks and the rankings across formulations therefore depend on which puzzle collection is tested.

\subsection{Strengthening and common propagation}
The unit sums revisit a modelling lesson from \cite{williams2026}. Note that requiring sums of forty-five is too weak to enforce all-different on its own, but those equalities can remove fractional possibilities from an already correct integer formulation. Row-rank equalities link cell values more closely to their ordering variables, while explicit singles preprocessing supplies forced values before the solver starts.\par

Table~\ref{tab:variants} summarises their observed effects on the common diagnostic selection. Adding unit sums reduces the number of finished runs with positive LP work, and adding row-rank equalities reduces it further. After applying the same singles preprocessing, the original natural model finishes every diagnostic run, and only a few runs record positive LP counts. These results are based on a mixture of randomly selected and illustrative puzzles, rather than the full benchmark.

\begin{table}[H]\centering\small
\caption{Formulation diagnostics on the same selection and ten-second cap. F denotes completion and C denotes certification. ``Positive'' counts positive LP effort among finished calls; unfinished calls are excluded from that column and shown through the finished denominator. The rank variant includes unit sums. Paired coverage and effort comparisons are in Appendix~\ref{app:task-comparisons}.\vspace{2.0mm}}\label{tab:variants}
\begin{tabular}{lrrrr}
\toprule
Variant & Done F & Positive F & Done C & Positive C \\
\midrule
Assignment & 90/90 & 0 & 90/90 & 0 \\
Raw natural & 88/90 & 24 & 86/90 & 22 \\
All-order & 88/90 & 24 & 87/90 & 23 \\
Row-order & 88/90 & 24 & 87/90 & 23 \\
Unit sums & 89/90 & 15 & 88/90 & 14 \\
Sums + ranks & 88/90 & 7 & 88/90 & 8 \\
Natural + singles & 90/90 & 3 & 90/90 & 2 \\
\bottomrule
\end{tabular}
\par
\end{table}

The shorter order exclusions do not uniformly improve effort. Among puzzles on which both certification solves finish, each sometimes decreases and sometimes increases LP iterations. Their completion models are identical to the baseline and give identical completed LP counts in the observed runs. Table~\ref{tab:pairedcoverage} reports paired coverage alongside completed-pair effort, so a timeout is not treated as a large observed count. Rank strengthening changes the ordering of effort more substantially: its Spearman correlation with the raw model is \RanksFindR\ for completion and \RanksCertR\ for certification on diagnostic puzzles where both solves finish. Ties and the diagnostic time limit remain part of these comparisons. The assignment variants, including the stronger bound of seventy-seven agreeing cells, remain at the zero-LP floor and yield no rank comparison.

The opening example also illustrates why fewer variables alone is an incomplete explanation. For \texttt{SUD-17-0213}, the row-order exclusion has fewer variables but more LP work than the cell-witness exclusion. Unit sums alone do not reduce the LP count to zero, whereas the rank equations and common singles closure do. Separate presolve inspections report reduced nonempty models for the former representations and infeasibility for the latter. These logs identify where the diagnostic operation stops; they do not establish that HiGHS executed our particular singles trace.

The harder sample gives a comparison in which all raw natural and rank-strengthened solves finish with positive LP counts. Their rank correlations are \SubmissionRanksFindR\ for completion and \SubmissionRanksCertR\ for certification. Rank strengthening reduces LP iterations on \SubmissionRanksFindLess\ of \HardN\ completion pairs and \SubmissionRanksCertLess\ of \HardN\ certification pairs, increases them on the remaining pairs, and leaves none equal. The median individual rank/raw ratios are \SubmissionRanksFindRatio\ and \SubmissionRanksCertRatio. Thus it usually reduces this counter while substantially changing the rankings. Table~\ref{tab:app-submission_hard_strengthening} reports the paired comparison; these ratios are not ratios of the marginal medians in Table~\ref{tab:app-review_hard_summary}.

\subsection{What changes when presolve is switched off?}\label{sec:presolve}
Keeping presolve enabled gives a comparison of each formulation as used by the configured solver. It also allows the formulations to benefit differently from the solver's simplifications. To examine that interaction, we repeat both tasks with presolve on and off for both MILP formulations on \PreBenchmarkN\ benchmark puzzles, five at every exact clue count, and the same \PreExternalN\ harder puzzles. All calls are fresh and use the original givens. Neither formulation receives the separate singles preprocessing in this experiment.

The common \PreTimeLimit-second campaign finishes \PrePrimaryFinished\ of \PrePrimaryCalls\ calls. We then rerun all eight conditions on each of the \PreExtendedN\ puzzles with any unfinished call, using a common \PreExtendedLimit-second cap within each puzzle. Every extended call finishes. The comparisons below use these complete matched records; Appendix~\ref{app:presolve} also gives the original fixed-limit coverage and distinguishes the two analyses.

The benchmark's measurement floor persists for assignment, although it is no longer universal: with presolve off, \PreAssignmentFindOffZero\ of \PreBenchmarkN\ completion runs and \PreAssignmentCertOffZero\ certification runs still report zero LP iterations. Raw natural-model zeros fall from \PreNaturalFindOnZero\ to \PreNaturalFindOffZero\ in each task. Thus some zero counts depend on main-model presolve, while others arise without it. The solver can still make deductions during search; setting this option to off does not remove every form of propagation or simplification. LP counts alone cannot measure the total cost of these deductions.

Presolve can also change which instances have large counters. On the harder sample, the on/off completion rank correlations are \PreHardAssignmentFindRank\ for assignment and \PreHardNaturalFindRank\ for the raw natural model. Neither completion ranking is stable across these settings. Switching presolve off sometimes lowers and sometimes raises LP iterations, as Table~\ref{tab:presolve-pairs} shows. For the same \PreCommonPositiveN\ benchmark puzzles with positive counts in both tasks under both settings, completion--certification correlations are \PreCommonOnTaskR\ with presolve on and \PreCommonOffTaskR\ with it off. This supports positive task association in that small common subset, without establishing stability of every instance ranking.

On the harder sample, assignment task association remains weak, changing from \PreHardAssignmentOnR\ to \PreHardAssignmentOffR. The natural-model estimate falls from \PreHardNaturalOnR\ to \PreHardNaturalOffR, with off-setting bootstrap interval $[\PreHardNaturalOffLow,\PreHardNaturalOffHigh]$. These small-sample estimates do not establish a general difference between the two settings. They show why the primary task correlations should be read as properties of the stated solver configuration. The benchmark's contrast in zero-iteration frequencies persists, but numerical rankings and task association remain conditional on the solver settings.

\subsection{Digit labels change natural-model effort}
Relabelling every digit consistently leaves the puzzle unchanged up to the names of its symbols. Clue count, clue-distribution imbalance, candidate sizes and singles deductions are preserved. Yet raw natural-model LP effort varies across labellings on \PermutationFindVariableBases\ of the twelve bases. For \texttt{SUD-17-0041}, all labellings finish and certification ranges from \PermutationCaseMin\ to \PermutationCaseMax\ LP iterations. These preserved features therefore cannot fully determine the solver's effort, even for different labellings of the same puzzle.

\begin{figure}[ht!]\centering\includegraphics[width=0.8\linewidth]{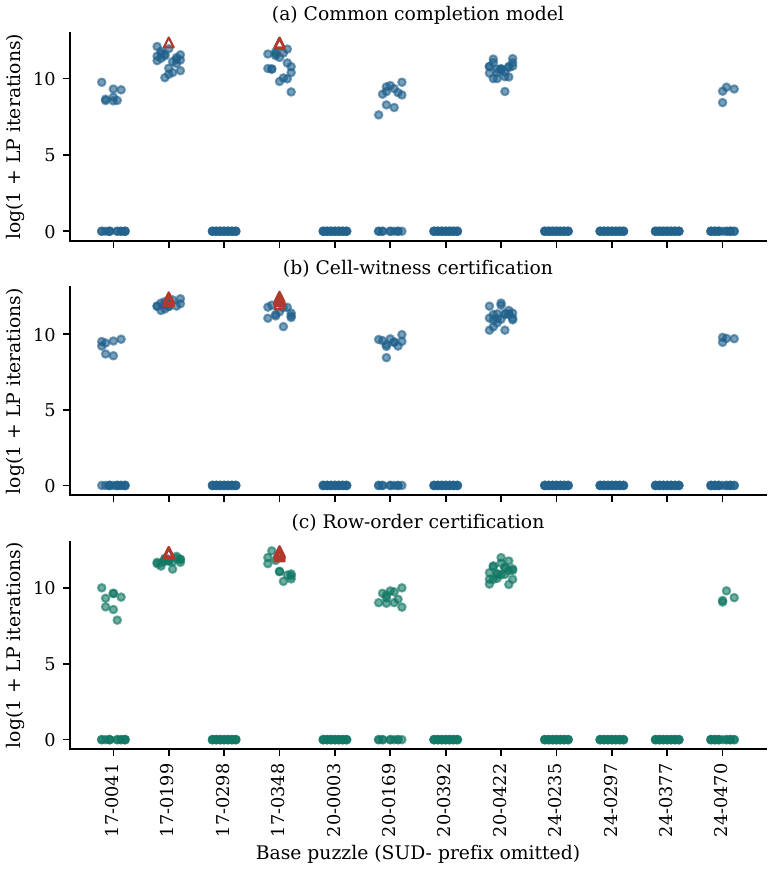}
\caption{Digit-permutation sensitivity with a fixed solver seed. Each horizontal position is one base puzzle; dots are finished calls for the identity and seeded relabellings, with a small deterministic horizontal offset to reveal ties. Open triangles show the work accrued by unfinished calls, not their unknown completed effort. Panel (a) shows their common completion model once; panels (b) and (c) compare the two certification exclusions. The shared horizontal axis identifies the base puzzles. Assignment calls, omitted here, all report zero LP iterations.}\label{fig:permutations}
\end{figure}

Raw natural completion finishes \PermutationFindDone\ of the labelling runs and certification finishes \PermutationCertDone, each from the same twelve groups. Restricting to the \PermutationCertCompleteBases\ fully finished certification groups, mean within-base log-effort variance is \PermutationCertWithinVariance, compared with \PermutationCertBetweenVariance\ for the variance of base means. This is appreciable within-base variation, although the selected bases and censored groups preclude a population variance decomposition. Row-order exclusion retains the sensitivity. The natural formulation treats digits as numerical cell values. A consistent permutation preserves the logical puzzle, but changes numerical differences and orderings in its big-$M$ inequalities. HiGHS can consequently perform different operations on the relabelled model. In the assignment model it instead permutes digit-indexed variables, although solver behaviour can still depend on ordering. Assignment remains at its measurement floor for every tested labelling; that finding cannot establish label invariance on harder populations.

\subsection{Adding correct clues can increase recorded effort}
Adding a correct clue sometimes increases the recorded effort, even though it leaves the puzzle with the same unique completion. This may seem surprising, since we would ordinarily expect an additional clue to make a Sudoku puzzle easier to solve by hand, assuming all existing clues remain unchanged. Natural-model completion LP iterations increase on \NestedFindUp\ of \NestedFindDone\ finished adjacent pairs, involving \NestedFindBases\ base puzzles, while certification increases on \NestedCertUp\ of \NestedCertDone\ pairs, involving \NestedCertBases\ bases. Most pairs tie. Among non-tied pairs, the increase fractions are \NestedFindNonTieFraction\ and \NestedCertNonTieFraction, showing that increases are more common when considering only pairs with a change in effort. Median increases are \NestedFindMedianUp\ and \NestedCertMedianUp\ LP iterations. Appendix~\ref{app:margin-chains} provides unresolved-pair counts and intervals obtained by resampling base puzzles.

\begin{figure}[ht!]\centering\includegraphics[width=0.85\linewidth]{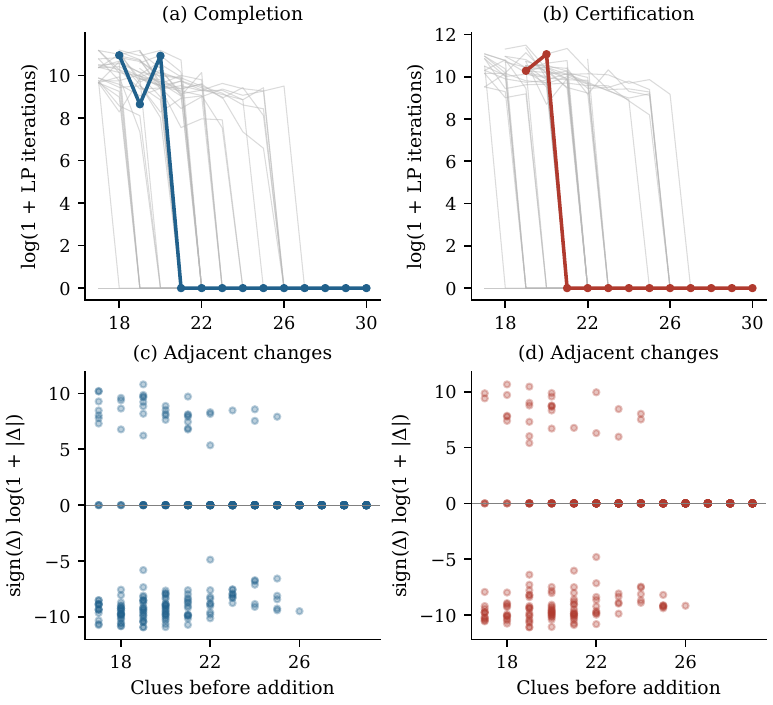}
\caption{Natural-model trajectories and adjacent changes in the active part of the nested sequences. In panels (a) and (b), grey lines follow all seeded addition orders; the darker line with markers highlights base 19, order 1, used in the worked example. Missing observations are unfinished calls. Panels (c) and (d) retain only finished adjacent pairs and use a signed log scale to show both increases and decreases. The full sequences extend to eighty clues; later zero-work steps are retained in the reported denominators.}\label{fig:chains}
\end{figure}

One selected addition raises fresh diagnostic completion effort from \ChainBeforeLP\ to \ChainAfterLP\ LP iterations. The separate presolve model nevertheless shrinks from \ChainBeforeRows\ rows and \ChainBeforeCols\ columns to \ChainAfterRows\ rows and \ChainAfterCols\ columns. Rank strengthening or common singles preprocessing reduces both solves to zero LP iterations. Adding the clue does not change the solution: the same unique completed grid remains feasible before and after the addition. What changes is the formulation presented to the solver. Reduced model dimensions alone do not explain the increase. This is a change in algorithmic effort despite an unchanged integer solution set; it is not a claim about human solving times.

Finally, the chains test the source-generation boundary descriptively. In the benchmark, singles complete \TwentyThreeSolved\ puzzles at twenty-three clues but only \DeletionTwentyFourSolved\ at twenty-four clues, despite the extra given. In the overlapping augmentation sample at twenty-four clues, \AugmentationTwentyFourSolved\ of forty records are singles-complete. These dependent chains are too small to estimate a general generator effect, but the contrast prevents reading the benchmark discontinuity as a pure effect of adding one clue. CP work also shows nonmonotone steps in the chains, whereas assignment remains at its zero-counter floor.

\subsection{Structural prediction: useful associations with limited scope}
Clue count already helps predict whether a natural-model solve records any LP iterations; candidate and propagation features improve the held-out scores. Predicting the size of a positive count is harder, although these features also provide gains. For positive natural-model completion effort, held-out $R^2$ rises from \NaturalPositiveFindBaseR\ to \NaturalPositiveFindFullR; for certification it rises from \NaturalPositiveCertBaseR\ to \NaturalPositiveCertFullR. The corresponding gains are \NaturalPositiveFindGain\ and \NaturalPositiveCertGain, with evaluation-bootstrap intervals $[\NaturalPositiveFindGainLow,\NaturalPositiveFindGainHigh]$ and $[\NaturalPositiveCertGainLow,\NaturalPositiveCertGainHigh]$. MAE also improves on these positive populations. They contain \PositiveModelFindN\ and \PositiveModelCertN\ eligible observations after the minimum stratum-size rule.

On the full benchmark, however, $R^2$ rises only from \NaturalAllFindBaseR\ to \NaturalAllFindFullR\ for completion and from \NaturalAllCertBaseR\ to \NaturalAllCertFullR\ for certification. MAE worsens from \NaturalAllFindBaseMAE\ to \NaturalAllFindFullMAE\ and from \NaturalAllCertBaseMAE\ to \NaturalAllCertFullMAE, respectively. A small improvement in squared error is therefore not a uniform predictive improvement. Most clue-count groups contain mainly zero counts. A shared coefficient for each added feature changes the errors across these groups, improving some predictions while worsening others. Separate grid and digit ablations each show small gains. Their combined effect should not be attributed to clue placement alone. In the lower generator range, the full feature block raises $R^2$ from \NaturalLowFindBaseR\ to \NaturalLowFindFullR\ for completion and from \NaturalLowCertBaseR\ to \NaturalLowCertFullR\ for certification. These restricted fits reinforce the need to state the population.

This MAE deterioration is not solely caused by negative predictions. The full model produces \ReviewFindNegativeN\ negative completion predictions and \ReviewCertNegativeN\ negative certification predictions. Clipping them at zero lowers MAE to \ReviewFindClippedMAE\ and \ReviewCertClippedMAE, respectively, still above their count-only values. Table~\ref{tab:clipping} reports prediction minima and both clipped and unclipped scores. Positive-only conclusions are assessed on their own eligible populations.

\begin{figure}[ht!]\centering\includegraphics[width=0.85\linewidth]{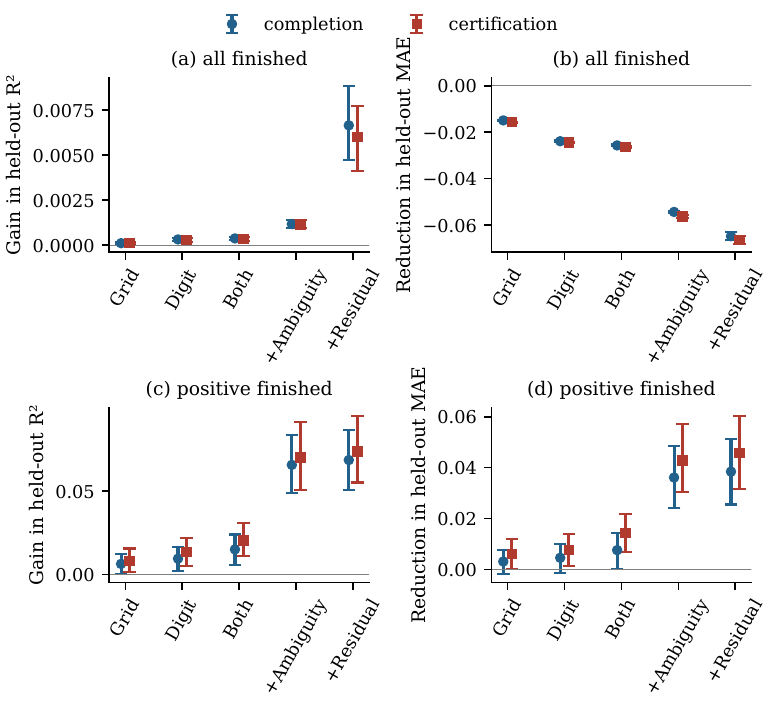}
\caption{Paired predictive gains from the original, unclipped predictions over exact-count intercepts for natural-model $\log(1+\mathrm{LP})$. Grid-only and digit-only ablations are followed by their combination, candidate ambiguity and singles residual count. Positive values favour the added features. Intervals resample held-out records within count, conditional on the fitted folds. Panels (a) and (b) use all finished calls; panels (c) and (d) use positive finished calls. Circles denote completion and squares certification. The two populations answer different prediction questions.}\label{fig:prediction}
\end{figure}

For CP deterministic work, the strongest observed predictor in the tested compact feature set is the number of cells unresolved after singles propagation. Simply recording whether any cells remain unresolved raises held-out $R^2$ to \CPAllFindIndicatorR\ for completion and \CPAllCertIndicatorR\ for certification. Using the number of unresolved cells raises these to \CPAllFindFullR\ and \CPAllCertFullR. Among puzzles with cells still unresolved, count-only values are \CPResidualFindBaseR\ and \CPResidualCertBaseR, rising to \CPResidualFindFullR\ and \CPResidualCertFullR\ with the full block. Thus the number of unresolved cells supplies information beyond a simple split between singles-complete and other puzzles. Table~\ref{tab:app-revision_cv_cp} gives the scores for the candidate, residual-indicator and residual-count models.

The cost comparison limits the practical interpretation. Across repeated serial batches on the same \SerialTimingN\ benchmark puzzles from the diagnostic selection (excluding the nested pair), full feature extraction takes \FeatureBatchMin--\FeatureBatchMax\ seconds, compared with \CPFindBatchMin--\CPFindBatchMax\ for CP completion and \CPCertBatchMin--\CPCertBatchMax\ for CP certification, including model construction. Singles alone take \SinglesBatchMin--\SinglesBatchMax\ seconds and are already included in the full-feature time. The features help explain recorded work, but their computation is not demonstrated to be a cheaper screening device than solving these standard-size instances directly.

The number of unresolved cells is less informative on the harder sample. No initial single is available in any harder-sample puzzle, so closure fills no blank and $U=81-C$ throughout that sample. At each exact count, $U$ is constant and adds no information beyond clue count for distinguishing those puzzles. This is a deduction from the observed propagation profile, not an estimate of external prediction error. Candidate and distribution features may still vary; we have not evaluated transfer of the full fitted predictor.

\subsection{An exploratory clue-deletion margin result}
All \SubmissionMarginPositiveN\ sampled puzzles with positive natural-model certification LP counts have a minimum clue-deletion margin of one. Recall that this means there is a single clue whose removal allows another completion. However, among puzzles with the same number of clues, the margin varies only in groups where every puzzle records zero LP iterations. Consequently, this sample cannot establish whether the margin helps explain differences in positive computational effort beyond the number of clues.

Including the margin in the predictive model also provides no improvement for CP-SAT deterministic work. These findings do not rule out a relationship between the margin and computational effort in other puzzle collections. Appendix~\ref{app:margin-results} provides the sample composition and predictive comparisons, while Appendix~\ref{app:validation} reports the independent optimality checks.


\section{Discussion}
Our first finding is that the mathematical formulation affects the information available to the solver. The raw natural formulation correctly enforces the Sudoku rules for integer solutions, but its LP relaxation imposes no restrictions on the cell values beyond their bounds and the original clues. By contrast, when a puzzle can be completed using only naked and hidden singles, the assignment LP relaxation determines the unique completion. Adding redundant equalities, rank constraints or explicit singles preprocessing can strengthen the natural formulation, although their effects on computational effort depend on the puzzle and solver.

Our second finding concerns the relationship between completion and uniqueness certification. Completion requires finding one valid grid, whereas fresh certification must rule out every alternative to a supplied completion. These tasks show a strong association under the raw natural formulation on the main benchmark, but the relationship changes on the harder puzzles and depends on how computational effort is measured. In particular, the weaker association for assignment LP iterations and stronger association for CP-SAT branches are not universal properties of these approaches. Moreover, when singles complete a puzzle, the same deductions already establish uniqueness without requiring a separate search.

A third finding is that changes to the mathematical formulation or puzzle representation can affect computational effort even when the underlying solution remains unchanged. Changing presolve settings or consistently relabelling digits can alter solver behaviour. Similarly, adding a correct clue preserves the unique completion but can sometimes increase the number of LP iterations. Thus, making a puzzle more constrained does not necessarily reduce the computational effort required by a particular solver.

Finally, structural features help explain some of the variation in computational effort among puzzles with the same number of clues. Candidate ambiguity and the number of cells unresolved after singles provide useful information beyond clue count, although their predictive value depends on the puzzle collection and effort measure. Taken together, these findings suggest that computational difficulty should be understood in relation to the puzzle, mathematical formulation, task, solver configuration and measure of effort, rather than treated as a fixed property of the puzzle itself.


\section{Limitations}
The main benchmark \cite{dataset2026} is balanced by exact clue count, rather than sampled uniformly from all uniquely solvable Sudoku puzzles. Its two generation procedures cover different clue-count ranges, making it difficult to separate the effects of clue count and puzzle generation. The external collections broaden the comparison but are not representative of all Sudoku puzzles, and none of the collections measures human solving difficulty.

Although all main-benchmark runs finish after staged recovery, some diagnostic and relabelling experiments contain unfinished runs. Comparisons based only on completed runs may therefore be affected by which puzzles finish within the time limit. Our primary measures are solver operation counts rather than elapsed time, and the results remain dependent on the solver settings and computational environment.

The experiments use one MILP solver and one CP-SAT implementation, so the findings do not establish a universal ranking of formulations or solvers. Some additional experiments, including digit relabelling, clue additions and clue-deletion margins, use relatively small samples. The predictive results describe associations within the tested collections and do not necessarily extend to other puzzle generators or unseen clue counts. In particular, the evidence is insufficient to establish whether the clue-deletion margin predicts computational effort beyond clue count.

Finally, the computational uniqueness checks rely partly on floating-point solver results rather than independently verifiable formal certificates. Additional integer and CP checks provide supporting evidence, while the mathematical proofs establish the correctness of the formulations.


\section{Conclusion}
Finding a Sudoku completion and proving that it is unique do not necessarily require similar computational effort. Our experiments show that the relationship between these tasks depends on the mathematical formulation, solver and measure of effort. On the main benchmark, completion and certification effort are strongly associated under the raw natural formulation. However, on the harder puzzles, this relationship is weak for the assignment formulation, moderate for the natural formulation and strong for CP-SAT when measured by branching. These comparisons also depend on the effort measure, with weaker associations observed for CP-SAT under deterministic work.

Our mathematical analysis helps explain these differences. Although the integer formulations describe the same completed Sudoku grids, their LP relaxations can behave very differently. The experiments also show that strengthening the formulation, changing presolve settings, relabelling digits or adding correct clues can change the solver's recorded effort, even when the underlying completed grid remains unchanged. These findings highlight why computational difficulty should be considered alongside the formulation, solver configuration, task and measure of effort, rather than as a property of the puzzle alone.

Future work could extend these comparisons to generalised $n^2\times n^2$ Sudoku, allowing us to study how computational effort changes with grid size. Further mathematical questions include identifying when the assignment LP relaxation determines a unique solution beyond the sufficient singles condition, and understanding how additional equalities and rank constraints strengthen the natural formulation.


\section*{Benchmark Data Availability}
The Sudoku benchmark \citep{dataset2026} used in this study, comprising \NInstances\ uniquely solvable instances and their recorded completions and uniqueness-verification results, is publicly available at \url{https://doi.org/10.5281/zenodo.22959803}. The additional computational experiments and analyses reported here were conducted separately, as described in Sections~\ref{sec:method}--\ref{sec:results} and the appendices. 



\clearpage


\section*{Appendix}
\appendix
\numberwithin{equation}{section}
\numberwithin{figure}{section}
\numberwithin{table}{section}
\numberwithin{proposition}{section}
\renewcommand{\theHfigure}{\thesection.\arabic{figure}}
\renewcommand{\theHtable}{\thesection.\arabic{table}}


\section{Additional Mathematical Details}\label{app:math}
\subsection{Projection calculations}\label{app:math-projection}

For Proposition~\mainref{prop:box}, let $\Delta=z_u-z_v\in[-8,8]$ and $y=(8-\Delta)/16$. The slacks in the two pair inequalities are respectively $7(8+\Delta)/16$ and $7(8-\Delta)/16$, so both are nonnegative. If a consecutive reference pair is stored in reverse cell order, its disagreement term is $y$ rather than $1-y$; either way it equals $(8+z_{v_k}-z_{v_{k+1}})/16$. This justifies the telescoping argument without changing the fixed orientation of any stored variable.

The completion projection argument extends to the digit domain $\{1,\ldots,q\}$ for every $q\ge2$, using the tight raw pairwise constant $M=q$. For $\Delta\in[-(q-1),q-1]$, the interval
\[
 [(1-\Delta)/q,(q-1-\Delta)/q]\cap[0,1]
\]
is nonempty: its endpoints are ordered, its lower endpoint is at most one and its upper endpoint is at least zero. This general observation concerns pairwise completion relaxations; the certification construction in the article is stated for standard Sudoku.

\subsection{Extrema of clue imbalance}\label{app:math-imbalance}
For integer counts $v_1,\ldots,v_9\in[0,9]$ summing to $C=9q+s$, $0\le s<9$,
\[
 I_{\max}(C)=\max\sum_a(v_a-C/9)^2=81q+s^2-C^2/9.
\]
\begin{proof}
The objective differs from $\sum_a v_a^2$ by the fixed quantity $C^2/9$. If $0<v_a\le v_b<9$, transferring one unit from $v_a$ to $v_b$ increases the sum of squares by $2(v_b-v_a)+2>0$. A maximiser therefore has at most one entry strictly between zero and nine. Its nonzero entries are $q$ copies of nine and, when $s>0$, one copy of $s$, giving the formula.
\end{proof}
The corresponding minimum is
\[
 I_{\min}(C)=s(9-s)/9.
\]
Indeed, transferring one unit from a count at least two larger than another strictly reduces the sum of squares; a minimiser therefore has counts differing by at most one. Substituting $s$ copies of $q+1$ and $9-s$ copies of $q$ gives the formula.

\subsection{Unavoidable sets and the clue-deletion margin}\label{app:math-margin}
The characterisation in Proposition~\ref{prop:margin} follows by comparing difference supports with minimal unavoidable sets.
\begin{proof}[Proof of Proposition~\ref{prop:margin}]
For an alternative completed grid $z$, the difference support $U(z)=\{(i,j):z_{ij}\ne\bar z_{ij}\}$ is an unavoidable set: fixing its complement leaves at least two completions. Every such finite set contains a minimal unavoidable set. Conversely, a minimal unavoidable set admits a competing grid whose difference support is exactly that set; a strictly smaller support would contradict minimality. Minimising $|\C(P)\cap U|$ over these sets therefore gives the same minimum as \eqref{eq:rho}.
\end{proof}
For any integer $r\ge1$, the intersection formula gives $\rho(P)\ge r$ if and only if every minimal unavoidable set meets the clues in at least $r$ cells. Equivalently, deleting any set of at most $r-1$ clues preserves uniqueness.

There is also a precise criterion for a new clue to increase the margin. Let $\mathcal W(P)$ contain the difference supports of alternative grids attaining $\rho(P)$, and add a correct clue at a previously blank cell $v$. The margin rises by one if and only if $v$ belongs to every member of $\mathcal W(P)$. If an optimal alternative agrees at $v$, its old objective remains attainable. Otherwise every old optimum gains one, while every nonoptimal integer objective was already at least one larger. Corollary~\ref{cor:rho} then forces equality. This criterion concerns robustness, not a tested policy for reducing solver effort.

The quantity $\sum_{i,j}|z_{ij}-\bar z_{ij}|$ instead measures an $L_1$ distance between arbitrary digit labels. It is not Hamming distance and need not survive relabelling: changing $1$ to $9$ contributes eight, whereas changing $4$ to $5$ contributes one. Cell disagreement is represented by $H(z,\bar z)=\sum_{i,j}\ind\{z_{ij}\ne\bar z_{ij}\}$; the margin counts only disagreements at original clue positions.


\section{Computational Methodology and Validation}\label{app:implementation}
\subsection{Source records and independent checks}\label{app:data}\label{app:validation}
Every benchmark puzzle and stored solution was reconstructed from the clue-count archives, checking cell order, exact clue counts, clues and Sudoku units. All archived uniqueness records were inspected. They contain optimal objectives and bounds for the earlier $-81$ test, independent solution counts, source code and logs. These original verification records are distinct from the solver measurements in this paper. The Zenodo API checksums pass; one stale README entry in the archive's internal SHA-256 manifest is a documented source exception. The original files are unchanged, and their concurrent timings are not used as new experimental measurements.

The archived construction code uses uncapped integer search to obtain completed grids and test clue deletions, rejecting attempts that cannot reach the target count. Its later AMPL verification uses a 120-second cap and rejects missing results, nonoptimal status, inconsistent completions or unmatched uniqueness bounds. It does not substitute another puzzle after a verification timeout. The released records all pass that verification; discarded construction attempts and decisions outside the recorded workflow cannot be reconstructed.

Table~\ref{tab:validation} gives the independent checks. The benchmark sample takes five puzzles per count with seed 20260926. The twenty nested bases use seed 20261226 and share \ValidationBaseOverlap\ records with that sample. Integer enumeration exhausts the completion space or stops at a second solution; uniqueness requires exhaustion with exactly one completion. The margin check takes the lexicographically first selected ID at each count and uses independent CP optimisation with a five-second cap, seed zero and no imported MILP lower bound. All \IndependentMarginN\ checks return matching optimal objectives and bounds, covering margins one through four with frequencies \MarginCrossOne, \MarginCrossTwo, \MarginCrossThree\ and \MarginCrossFour.

\begin{table}[H]\centering\small
\caption{Independent validation coverage. Resolved means exhaustive uniqueness enumeration or matching optimal bounds, as appropriate. Checks of the archived verification records cover the full benchmark separately.\vspace{2.0mm}}\label{tab:validation}
\begin{adjustbox}{max width=\linewidth}
\begin{tabular}{lrrrr}
\toprule
Check & Submitted & Resolved & Disagree & Unresolved \\
\midrule
Benchmark: integer backtracking & 320 & 320 & 0 & 0 \\
Nested bases: integer backtracking & 20 & 20 & 0 & 0 \\
Norvig: before deduplication & 106 & 106 & 0 & 0 \\
Norvig: retained & 105 & 105 & 0 & 0 \\
Margin: independent CP & 64 & 64 & 0 & 0 \\
Harder external: integer backtracking & 64 & 64 & 0 & 0 \\
\bottomrule
\end{tabular}
\end{adjustbox}\end{table}

For the external collections, the Norvig lists are checked before symmetry deduplication. The harder sample uses Tdoku's \path{puzzles5_forum_hardest_1905_11+} file, attributed to the Enjoy Sudoku Players Forum's May 2019 collection \citep{tdoku}. Sampling is without replacement, using Python's \texttt{Random} generator with seed 20261007 and its \texttt{sample} method, and preserves selected source-line order. The integer bit-mask enumerator is separate from both MILP and CP-SAT. Every selected puzzle is retained, independently verified and checked for symmetry overlap with the earlier populations.

Finite formulation checks cover local disequalities, alternative-solution fixtures, ordering exclusions, rank equations, fractional projection witnesses, imbalance extrema and symmetry transformations. Returned grids are checked against all clues and units, and against the stored reference where uniqueness requires equality. Certification outputs are checked for reference exclusion; timeouts never establish uniqueness. The exported numerical observations were reconciled with their saved solver records, preserving the distinction between missing values and zero. These checks support the computations without turning floating-point infeasibility reports into portable formal proofs.

\subsection{Exact symmetry checks}\label{app:symmetry}
The coloured incidence graph contains all 81 cells, 18 row or column vertices sharing one colour, nine block vertices and nine digit vertices sharing another colour. Each cell is joined to its row, column and block; a given cell is also joined to its digit. Thus the graph has 117 vertices and $243+C$ edges. Count signatures and colour-refinement hashes reject impossible matches before exact colour-preserving VF2 isomorphism (NetworkX 3.2.1). Hash equality alone is never accepted as equivalence. No equivalent pair is found among the \NInstances\ benchmark records.

This graph represents precisely the tested Sudoku symmetries. Its line-intersection graph is $K_{9,9}$, so an isomorphism preserves the row and column families or swaps them globally. Block incidence restricts row and column permutations to bands and stacks and permutations within them; digit vertices permit consistent relabelling. Conversely, each such Sudoku symmetry induces a coloured graph isomorphism. The absence of equivalent benchmark pairs removes the need to group those pairs in cross-validation, but does not exclude other dependence arising from generation.

\subsection{Solver configuration and recorded effort}\label{app:solver}
Cells use row-major ordering, with assignment variables ordered by cell and then increasing digit. Peer pairs are deduplicated and sorted; natural cell variables precede ordering binaries and then disagreement witnesses. For $u<v$, the convention is $y_{uv}=1$ when $z_u<z_v$. Assignment clues are imposed by variable lower bounds. Unit sums and ranks are added as specified in Section~\ref{sec:relaxation}. Common-singles variants pass the same closure as bound fixings to the original models, recording preprocessing separately and retaining the original clue count for analysis.

HiGHS 1.15.1 uses one thread, parallel mode off, seed zero, default heuristics, feasibility tolerances $10^{-7}$ and zero absolute and relative MIP gaps. Presolve is on except in the named sensitivity calls. OR-Tools 9.15.6755 uses one search worker, seed zero and first-solution stopping. Completion receives no reference hint; certification starts afresh with the reference excluded. Separate worker pools avoid interactions between the packages' bundled libraries. The initial natural screen uses six workers; the main assignment/CP campaign, recovery and diagnostics use four single-threaded workers. Other application activity is uncontrolled.

MILP LP iterations come from the full solve report, rather than just the final simplex-information field. Missing early-return fields remain missing. CP deterministic time is a solver-specific work proxy. A standalone presolve inspection uses a separate model copy after each diagnostic solve, with the same thread count, seed and cap but otherwise library defaults, including MIP feasibility tolerance $10^{-6}$. Its reduced dimensions do not reconstruct the measured search state. Zero dimensions require a status: they can mean an empty feasible model or detected infeasibility. Neither these inspections nor CP probing logs identify a particular human deduction sequence.

The serial timing comparison uses the same \SerialTimingN\ benchmark puzzles from the diagnostic selection, excluding the nested pair, with three serial repeats and operation order reversed in the middle repeat. CP durations include model construction; full features include singles closure. These batches support the local ranges reported in Section~\ref{sec:results}, rather than precise millisecond rankings.

\subsection{Exact singles procedure and feature conventions}\label{app:features}
Clue counts and distribution features refer to original givens. Population variances divide by nine and entropies use natural logarithms. Initial candidates eliminate given peer digits once. Initial forced-cell counts count distinct cells, avoiding repeated counts of the same inference. The deterministic closure uses synchronous rounds:
\begin{enumerate}[nosep]
\item Recompute all empty-cell candidate sets from the current grid.
\item Collect the assignments given by naked singles.
\item For each unit and absent digit, collect its assignment if exactly one cell admits that digit.
\item Check that all proposals agree, apply them together, and count one successful round.
\item Stop when no proposal remains; the final unsuccessful inspection is not a round.
\end{enumerate}
Newly solved cells exclude givens, and solved proportions divide by the original blank count. Residual edges join unresolved peer cells once; empty-graph summaries are zero. The normalised imbalance indices use the extrema in Appendix~\ref{app:math-imbalance}. A degenerate index is set to zero and flagged; within this benchmark that occurs at eighty clues. No additional Sudoku technique is included in this closure.

\subsection{Statistical specifications}\label{app:prediction}
Regression uses unpenalised exact-count intercepts: training-stratum means are removed from predictors and outcome, ridge is fitted to the residuals, and the means are restored for prediction. Numerical coefficients are standardised within each training fold, with ridge penalty one. The logistic occurrence model penalises all non-intercept coefficients, including count indicators, using inverse penalty one. There is no held-out tuning.

Five fixed folds are stratified by exact count. Regression populations retain only counts with at least five eligible observations, and all feature blocks use identical records and folds within a population. Scores pool out-of-fold predictions rather than averaging fold-specific ratios. Paired predictive intervals use five hundred seeded resamples within count, applying the same indices to baseline and extended predictions. They describe evaluation-loss variation conditional on the fitted folds, not a refitting bootstrap or transfer to a new generator. Clipping predictions at zero uses the same fitted values and does not increase pointwise absolute or squared error for nonnegative outcomes.

For task association, we first select the stated positive or full population. For the within-count statistic, we calculate midranks separately within each exact clue count, subtract the mean rank in each stratum and correlate the pooled centred vectors. Constant strata contribute zero variation; this is not an equally weighted average of stratum correlations. Five hundred within-count paired resamples recompute the ranks, using seed 20261004 for the benchmark and 20261007 for the harder sample. Paired model contrasts use the same resampled puzzle IDs and subtract the two coefficients. These are descriptive percentile intervals, not multiplicity-adjusted hypothesis tests. The distinct presolve bootstrap is specified in Appendix~\ref{app:presolve}.

Relabelling variances use $\log(1+\mathrm{LP})$ and only bases whose complete identity-plus-permutation group finishes. The variance of base means includes finite-repetition noise and is not an unbiased random-effects component. Clue-frequency summaries, candidate cardinalities and singles residual structure are verified invariant under these permutations. Nested-chain intervals resample whole base puzzles, retaining both addition orders together. An all-zero empirical increase rate is not a population guarantee of monotonicity.


\section{Additional Experimental Results}\label{app:comparisons}
\subsection{Natural-model recovery}\label{app:recovery}
Only unfinished calls from the one-second screen are restarted, with limits of ten, sixty and three hundred seconds. Each restart is a fresh solve with the same formulation and settings. The retained counter is from the first successful attempt; unsuccessful attempts are not added to it. Table~\ref{tab:app-revision_recovery} gives the stage coverage. All \NInstances\ puzzles ultimately have both tasks finished.

\begin{table}[H]\centering\small
\caption{Natural-model recovery stages. Finished counts refer to calls at that stage, not cumulative coverage. Unfinished counters are never substituted for completed effort.\vspace{2.0mm}}\label{tab:app-revision_recovery}
\begin{adjustbox}{max width=\linewidth}
\begin{tabular}{llrrr}
\toprule
Stage & Task & Calls & Finished & Unresolved \\
\midrule
bigm\_full & certification & 32,000 & 30,555 & 1,445 \\
bigm\_full & completion & 32,000 & 30,869 & 1,131 \\
recovery10 & certification & 1,445 & 1,207 & 238 \\
recovery10 & completion & 1,131 & 1,029 & 102 \\
recovery60 & certification & 238 & 214 & 24 \\
recovery60 & completion & 102 & 93 & 9 \\
recovery300 & certification & 24 & 24 & 0 \\
recovery300 & completion & 9 & 9 & 0 \\
\bottomrule
\end{tabular}
\end{adjustbox}\end{table}

In the saved one- versus five-second comparison, each task has 640 matched calls. Both finish on 618 completion pairs and 608 certification pairs; LP and node counts agree exactly on every such pair. This does not establish invariance for all time limits or loads. Including work from abandoned attempts gives the task associations in Table~\ref{tab:app-review_workflow}, close to the retained-call values in this experiment.

\begin{table}[H]\centering\small
\caption{Retained-call and cumulative recovery-work task associations. All-attempt LP work includes abandoned time-limited calls; neither measure is elapsed time.\vspace{2.0mm}}\label{tab:app-review_workflow}
\begin{adjustbox}{max width=\linewidth}
\begin{tabular}{llrrr}
\toprule
Effort scope & Pairs & N & $r_S$ & $r_{\mathrm{within}}$ \\
\midrule
Retained call & All & 32,000 & 0.991 & 0.922 \\
Retained call & Both positive & 1,928 & 0.841 & 0.781 \\
All attempts & All & 32,000 & 0.991 & 0.922 \\
All attempts & Both positive & 1,928 & 0.840 & 0.780 \\
\bottomrule
\end{tabular}
\end{adjustbox}\end{table}

\subsection{Benchmark task associations}\label{app:benchmark-tasks}
Tables~\ref{tab:conditional} and \ref{tab:conditionalregimes} separate shared-zero effects from within-count association and show the smaller positive population in the upper generation range. Table~\ref{tab:app-review_paired_cost} distinguishes rank association from the relative cost of the two tasks.

\begin{table}[H]\centering\small
\caption{Raw-natural task association, controlling shared zeros and exact clue count. Positive subsets use the stated counter. Intervals apply to the centred within-count statistic; ordinary correlations are descriptive.\vspace{2.0mm}}\label{tab:conditional}
\begin{adjustbox}{max width=\linewidth}
\begin{tabular}{llrrrr}
\toprule
Counter & Pairs & N & $r_S$ & $r_{\mathrm{within}}$ & 95\% interval \\
\midrule
LP iterations & All & 32,000 & 0.991 & 0.922 & [0.911, 0.931] \\
LP iterations & Either positive & 1,962 & 0.846 & 0.785 & [0.757, 0.808] \\
LP iterations & Both positive & 1,928 & 0.841 & 0.781 & [0.751, 0.800] \\
Nodes & All & 32,000 & 0.967 & 0.891 & [0.878, 0.901] \\
Nodes & Either positive & 1,933 & 0.813 & 0.757 & [0.729, 0.780] \\
Nodes & Both positive & 1,807 & 0.813 & 0.748 & [0.718, 0.776] \\
\bottomrule
\end{tabular}
\end{adjustbox}\end{table}

\begin{table}[H]\centering\small
\caption{Within-count association among pairs positive in both tasks, separated by generation range. The sparse upper range gives weaker precision and does not identify a generator effect.\vspace{2.0mm}}\label{tab:conditionalregimes}
\begin{adjustbox}{max width=\linewidth}
\begin{tabular}{llrrr}
\toprule
Counter & Clue range & N & $r_{\mathrm{within}}$ & 95\% interval \\
\midrule
LP iterations & 17--23 & 1,852 & 0.781 & [0.753, 0.802] \\
LP iterations & 24--80 & 76 & 0.486 & [0.238, 0.682] \\
Nodes & 17--23 & 1,743 & 0.748 & [0.717, 0.775] \\
Nodes & 24--80 & 64 & 0.457 & [0.209, 0.685] \\
\bottomrule
\end{tabular}
\end{adjustbox}\end{table}

\begin{table}[H]\centering\small
\caption{Paired raw-natural LP costs. $\Delta=\log(1+D_{\mathrm{cert}})-\log(1+D_{\mathrm{find}})$. Less and more compare certification with completion on the same puzzle.\vspace{2.0mm}}\label{tab:app-review_paired_cost}
\begin{adjustbox}{max width=\linewidth}
\begin{tabular}{lrrrrrr}
\toprule
Pairs & N & Cert. less & Equal & Cert. more & Median $\Delta$ & IQR of $\Delta$ \\
\midrule
All & 32,000 & 389 & 30,038 & 1,573 & 0.000 & [0.000, 0.000] \\
Either positive & 1,962 & 389 & 0 & 1,573 & 0.490 & [0.095, 0.975] \\
Both positive & 1,928 & 362 & 0 & 1,566 & 0.497 & [0.119, 0.977] \\
\bottomrule
\end{tabular}
\end{adjustbox}\end{table}

\subsection{Harder-sample comparisons}\label{app:external}
The principal correlations appear in Table~\ref{tab:hard}. Tables~\ref{tab:app-review_hard_summary}--\ref{tab:app-submission_hard_counters} give their coverage, count composition, paired contrasts and alternative-counter checks. The one-puzzle count stratum contributes no centred rank variation. Within-count resampling preserves the observed frequencies and does not account for a different source-count mixture.

\begin{table}[H]\centering\small
\caption{Harder-sample coverage and effort under the common ten-second cap. F denotes completion and C certification. Medians use finished calls; LP iterations and branches are different counters.\vspace{2.0mm}}\label{tab:app-review_hard_summary}
\begin{adjustbox}{max width=\linewidth}
\begin{tabular}{llrrrr}
\toprule
Model & Counter & F finish & C finish & F median & C median \\
\midrule
Assignment & LP & 64 & 64 & 1,223.5 & 4,188.5 \\
CP-SAT & Branches & 64 & 64 & 2,231.0 & 2,278.0 \\
Raw natural & LP & 64 & 64 & 42,147.0 & 79,259.5 \\
Natural + singles & LP & 64 & 64 & 42,147.0 & 79,259.5 \\
Sums + ranks & LP & 64 & 64 & 27,846.0 & 50,657.5 \\
Row-order & LP & 64 & 63 & 42,147.0 & 86,347.0 \\
\bottomrule
\end{tabular}
\end{adjustbox}\end{table}

\begin{table}[H]\centering\small
\caption{Harder-sample count frequencies and positive assignment-completion LP counts. Every assignment task finishes.\vspace{2.0mm}}\label{tab:hardcounts}
\begin{adjustbox}{max width=\linewidth}
\begin{tabular}{rrr}
\toprule
Clues & Puzzles & Positive assignment completion LP \\
\midrule
21 & 1 & 1 \\
22 & 8 & 8 \\
23 & 20 & 16 \\
24 & 21 & 21 \\
25 & 9 & 7 \\
26 & 5 & 5 \\
\bottomrule
\end{tabular}
\end{adjustbox}\end{table}

\begin{table}[H]\centering\small
\caption{Paired differences in harder-sample task correlation (left minus right), on all \HardN\ pairs. Each of 500 resamples uses identical puzzle IDs for both models; every coefficient is defined. Intervals describe this curated population.\vspace{2.0mm}}\label{tab:app-submission_hard_contrasts}
\begin{adjustbox}{max width=\linewidth}
\begin{tabular}{lllrr}
\toprule
Left & Right & Statistic & Difference & 95\% interval \\
\midrule
Raw natural & Assignment & Ordinary & 0.373 & [0.056, 0.692] \\
Raw natural & Assignment & Within count & 0.371 & [-0.069, 0.798] \\
CP-SAT & Assignment & Ordinary & 0.851 & [0.583, 1.140] \\
CP-SAT & Assignment & Within count & 0.938 & [0.551, 1.229] \\
CP-SAT & Raw natural & Ordinary & 0.478 & [0.260, 0.699] \\
CP-SAT & Raw natural & Within count & 0.566 & [0.210, 0.887] \\
\bottomrule
\end{tabular}
\end{adjustbox}\end{table}

\begin{table}[H]\centering\footnotesize
\caption{Alternative task counters on all \HardN\ harder-sample pairs. MILP nodes include the root. CP deterministic work is solver-specific; intervals use count-stratified resampling.\vspace{2.0mm}}\label{tab:app-submission_hard_counters}
\begin{adjustbox}{max width=\linewidth}
\begin{tabular}{llrrrr}
\toprule
Model & Counter & $r_S$ & 95\% interval & $r_{\mathrm{within}}$ & 95\% interval \\
\midrule
Assignment & Nodes & 0.048 & [-0.183, 0.264] & -0.047 & [-0.358, 0.261] \\
Raw natural & Nodes & 0.394 & [0.162, 0.586] & 0.182 & [-0.187, 0.496] \\
CP-SAT & Det. work & 0.496 & [0.298, 0.659] & 0.441 & [0.151, 0.643] \\
\bottomrule
\end{tabular}
\end{adjustbox}\end{table}

\begin{table}[H]\centering\footnotesize
\caption{Additional harder-sample task correlations using LP iterations. Assignment here is restricted to pairs positive in both tasks; the other rows use every jointly finished pair. The raw and CP comparisons are in Table~\ref{tab:hard}.\vspace{2.0mm}}\label{tab:app-review_hard_tasks}
\begin{adjustbox}{max width=\linewidth}
\begin{tabular}{llrrrrr}
\toprule
Model & Pairs & N & $r_S$ & 95\% interval & $r_{\mathrm{within}}$ & 95\% interval \\
\midrule
Assignment & Both positive & 58 & 0.100 & [-0.188, 0.351] & -0.020 & [-0.385, 0.327] \\
Row-order & All finished & 63 & 0.390 & [0.176, 0.592] & 0.275 & [-0.025, 0.540] \\
Sums + ranks & All finished & 64 & 0.393 & [0.134, 0.595] & 0.330 & [-0.014, 0.591] \\
\bottomrule
\end{tabular}
\end{adjustbox}\end{table}

\begin{table}[H]\centering\small
\caption{Cross-formulation ranks on jointly finished harder-sample calls. MILP pairs use LP iterations. Assignment versus CP-SAT uses LP iterations versus branches, changing both solver and counter.\vspace{2.0mm}}\label{tab:app-review_hard_ranks}
\begin{adjustbox}{max width=\linewidth}
\begin{tabular}{lllrr}
\toprule
Task & Left & Right & Pairs & $r_S$ \\
\midrule
certification & Assignment & Raw natural & 64 & -0.022 \\
certification & Assignment & Sums + ranks & 64 & -0.068 \\
certification & Raw natural & Sums + ranks & 64 & 0.677 \\
certification & Assignment & CP-SAT & 64 & 0.409 \\
completion & Assignment & Raw natural & 64 & 0.144 \\
completion & Assignment & Sums + ranks & 64 & 0.145 \\
completion & Raw natural & Sums + ranks & 64 & 0.360 \\
completion & Assignment & CP-SAT & 64 & 0.084 \\
\bottomrule
\end{tabular}
\end{adjustbox}\end{table}

\begin{table}[H]\centering\small
\caption{Paired rank-strengthening effects on harder-sample LP work. All calls finish with positive counts. Less/equal/more compare the strengthened model with raw natural; ratios are formed per puzzle before taking their median.\vspace{2.0mm}}\label{tab:app-submission_hard_strengthening}
\begin{adjustbox}{max width=\linewidth}
\begin{tabular}{lrrrrrr}
\toprule
Task & N & Less & Equal & More & Median rank/raw & $r_S$ \\
\midrule
completion & 64 & 50 & 0 & 14 & 0.630 & 0.360 \\
certification & 64 & 59 & 0 & 5 & 0.611 & 0.677 \\
\bottomrule
\end{tabular}
\end{adjustbox}\end{table}

The separate Norvig check contains \ExternalN\ retained puzzles. Assignment and CP-SAT finish both tasks on all of them, with positive LP and branch counts respectively on the same two puzzles. Assignment completion processes one node in each of those cases, whereas certification processes more than one. Raw natural finishes \ExternalNaturalFindDone\ completion and \ExternalNaturalCertDone\ certification calls. The extensive ties in this collection motivate the harder sample; they do not establish a common scale across counters.

\subsection{Strengthening on the diagnostic selection}\label{app:task-comparisons}
Table~\ref{tab:variants} gives marginal coverage and positive counts. Table~\ref{tab:pairedcoverage} makes paired coverage explicit before comparing effort. Every row partitions the same ninety diagnostic puzzles. Only the jointly finished calls contribute to the rank and less/equal/more columns; unfinished effort is not imputed. Assignment variants finish all calls at the zero-LP floor. The order-exclusion variants have unchanged completion matrices.

\begin{table}[H]\centering\footnotesize
\caption{Paired diagnostic coverage and LP effort relative to raw natural. Both, Raw only, Variant only and Neither describe completion status under the same cap. The remaining columns use only Both: less/equal/more compare variant iterations with raw iterations.\vspace{2.0mm}}\label{tab:pairedcoverage}
\begin{adjustbox}{max width=\linewidth}\begin{tabular}{llrrrrrrrr}
\toprule Task & Variant & Both & Raw only & Variant only & Neither & $r_S$ & Less & Equal & More \\ \midrule
completion & Natural + singles & 88 & 0 & 2 & 0 & 0.279 & 24 & 64 & 0 \\
completion & All-order & 88 & 0 & 0 & 2 & 1.000 & 0 & 88 & 0 \\
completion & Row-order & 88 & 0 & 0 & 2 & 1.000 & 0 & 88 & 0 \\
completion & Unit sums & 88 & 0 & 1 & 1 & 0.783 & 20 & 64 & 4 \\
completion & Sums + ranks & 87 & 1 & 1 & 1 & 0.554 & 23 & 64 & 0 \\
certification & Natural + singles & 86 & 0 & 4 & 0 & 0.244 & 22 & 64 & 0 \\
certification & All-order & 85 & 1 & 2 & 2 & 0.952 & 8 & 63 & 14 \\
certification & Row-order & 85 & 1 & 2 & 2 & 0.954 & 12 & 63 & 10 \\
certification & Unit sums & 85 & 1 & 3 & 1 & 0.740 & 17 & 64 & 4 \\
certification & Sums + ranks & 85 & 1 & 3 & 1 & 0.580 & 20 & 64 & 1 \\
\bottomrule\end{tabular}\end{adjustbox}\end{table}

For the opening hard case, \texttt{SUD-17-0213}, separate presolve inspections leave nonempty models for raw natural, row-order and unit-sum certification, but detect infeasibility for sums plus ranks and for common singles. For \texttt{SUD-17-0008}, the raw exclusion is already detected infeasible in presolve. These observations locate where each diagnostic operation stops; they do not show that HiGHS executes the explicit singles sequence.

\subsection{Matched presolve sensitivity}\label{app:presolve}
The design reuses the fixed five-per-count benchmark sample (seed 20260926) and all \PreExternalN\ harder records, selected before these measurements. Independent integer enumeration checks every selected puzzle. All fresh calls receive original givens and no completion hint, retained search state or external singles closure. They differ only in the task, formulation and presolve setting being compared. The initial cap is \PreTimeLimit\ seconds, with four single-threaded workers, solver seed zero and job order shuffled with seed 20261023. Solver tolerances and gaps match the primary MILP runs.

\begin{table}[H]\centering\small
\caption{Presolve sensitivity at the common ten-second cap. F denotes completion and C certification. Each row uses the same puzzles under both settings. Unfinished calls are retained as time-limited observations, not assigned completed effort.\vspace{2.0mm}}\label{tab:presolve-coverage}
\begin{tabular}{llrrr}
\toprule
Population and model & Task & Puzzles & Finished on & Finished off \\
\midrule
Benchmark: assignment & F & 320 & 320 & 320 \\
Benchmark: assignment & C & 320 & 320 & 320 \\
Benchmark: raw natural & F & 320 & 320 & 319 \\
Benchmark: raw natural & C & 320 & 318 & 318 \\
Harder: assignment & F & 64 & 64 & 64 \\
Harder: assignment & C & 64 & 64 & 64 \\
Harder: raw natural & F & 64 & 64 & 64 \\
Harder: raw natural & C & 64 & 64 & 59 \\
\bottomrule
\end{tabular}

\end{table}

The \PrePrimaryUnresolved\ unfinished calls involve \PreExtendedN\ puzzles. Every condition on those puzzles is rerun at \PreExtendedLimit\ seconds, including conditions already finished. All \PreExtendedCalls\ extension calls finish. Each affected puzzle then uses the extended records for all eight conditions; other puzzles retain their ten-second records. Thus the cap is common within each puzzle, not throughout the collection. All \PreRepeatN\ previously finished calls repeat with identical LP and node counts. The fixed-limit coverage and extended paired comparisons remain distinct.

\begin{table}[H]\centering\small\setlength{\tabcolsep}{4pt}
\caption{Paired LP-iteration comparisons after the matched cap extension. Zero columns count finished zero-iteration calls. ``Less'' and ``More'' count puzzles with fewer or more iterations when presolve is off; unlisted pairs tie. The final column is the Spearman correlation between the on and off counts on all $n$ pairs. A dash means the on counter is constant, so correlation is undefined.\vspace{2.0mm}}\label{tab:presolve-pairs}
\begin{tabular}{llrrrrrr}
\toprule
Population and model & Task & $n$ & Zero on & Zero off & Less & More & $r_s$ \\
\midrule
Benchmark: assignment & F & 320 & 320 & 313 & 0 & 7 & --- \\
Benchmark: assignment & C & 320 & 320 & 311 & 0 & 9 & --- \\
Benchmark: raw natural & F & 320 & 301 & 238 & 11 & 71 & 0.532 \\
Benchmark: raw natural & C & 320 & 301 & 238 & 11 & 71 & 0.534 \\
Harder: assignment & F & 64 & 6 & 0 & 19 & 45 & 0.086 \\
Harder: assignment & C & 64 & 0 & 0 & 31 & 33 & 0.458 \\
Harder: raw natural & F & 64 & 0 & 0 & 22 & 42 & 0.096 \\
Harder: raw natural & C & 64 & 0 & 0 & 18 & 46 & 0.461 \\
\bottomrule
\end{tabular}

\end{table}

\begin{table}[H]\centering\small
\caption{Completion--certification LP-iteration association after the matched extension. The first block retains shared zeros. The final two rows use exactly the same benchmark puzzles with positive counts in both tasks under both settings. Percentile intervals use five hundred seeded paired resamples, within clue count for the benchmark and across puzzles for the harder sample. A dash denotes undefined correlation. These intervals condition on the selected records and the completed extension.\vspace{2.0mm}}\label{tab:presolve-tasks}
\begin{tabular}{llrrr}
\toprule
Population and model & Presolve & $n$ & Task $r_s$ & Bootstrap interval \\
\midrule
Benchmark: assignment & on & 320 & --- & --- \\
Benchmark: assignment & off & 320 & 0.8831 & $[0.7083, 1.0000]$ \\
Benchmark: raw natural & on & 320 & 0.9997 & $[0.9994, 1.0000]$ \\
Benchmark: raw natural & off & 320 & 0.9994 & $[0.9988, 0.9997]$ \\
Harder: assignment & on & 64 & 0.0307 & $[-0.2388, 0.2773]$ \\
Harder: assignment & off & 64 & 0.0750 & $[-0.1843, 0.3436]$ \\
Harder: raw natural & on & 64 & 0.4038 & $[0.2004, 0.5859]$ \\
Harder: raw natural & off & 64 & 0.2180 & $[-0.0659, 0.4635]$ \\
\midrule
\multicolumn{5}{l}{Same benchmark puzzles positive in all four raw-natural calls} \\
Benchmark: raw natural & on & 19 & 0.7982 & $[0.5499, 0.9587]$ \\
Benchmark: raw natural & off & 19 & 0.8333 & $[0.7160, 0.9187]$ \\
\bottomrule
\end{tabular}

\end{table}

Switching off main-model presolve leaves in-search deductions available. Some logs still record presolve within sub-MIP heuristics. Thus this experiment changes one solver option, rather than removing every simplifying operation. The opening examples were checked separately under the same HiGHS version with a sixty-second cap. Table~\ref{tab:opening-presolve} shows that their certification ordering persists with presolve off, while their completion ordering reverses. All completions match the independently verified reference; all exclusion models are infeasible.

\begin{table}[H]\centering\small
\caption{The two Figure~\ref{fig:opening} puzzles under both presolve settings. F denotes completion and C certification. LP columns count reported iterations, not LP solves. Both original puzzles are feasible and unique, and the infeasible models are their exclusions.\vspace{2.0mm}}\label{tab:opening-presolve}
\begin{tabular}{llrrrr}
\toprule
Puzzle & Task & LP on & LP off & Nodes on & Nodes off \\
\midrule
\texttt{SUD-17-0008} & C & 0 & 16,681 & 0 & 212 \\
\texttt{SUD-17-0213} & C & 16,367 & 26,353 & 327 & 513 \\
\texttt{SUD-17-0008} & F & 0 & 20,452 & 0 & 251 \\
\texttt{SUD-17-0213} & F & 8,404 & 15,841 & 38 & 177 \\
\bottomrule
\end{tabular}

\end{table}

\subsection{Digit relabelling}\label{app:relabelling}
Table~\ref{tab:app-revision_permutations} supplies the coverage and variance denominators for Figure~\ref{fig:permutations}. Each base has its identity labelling and twenty distinct seeded permutations. Complete-group restriction avoids assigning final effort to unfinished calls, but does not make these selected bases representative.

\begin{table}[H]\centering\footnotesize
\caption{Relabelling coverage and log-effort variation. Variances use complete labelling groups only. Variable-base counts use finished values without imputing censored outcomes.\vspace{2.0mm}}\label{tab:app-revision_permutations}
\begin{adjustbox}{max width=\linewidth}
\begin{tabular}{llrrrrr}
\toprule
Variant & Task & Done / N & Complete bases & Variable bases & Within var. & Base-mean var. \\
\midrule
Assignment & certification & 252/252 & 12 & 0 & 0.000 & 0.000 \\
Assignment & completion & 252/252 & 12 & 0 & 0.000 & 0.000 \\
Raw natural & certification & 236/252 & 10 & 6 & 5.839 & 13.035 \\
Raw natural & completion & 247/252 & 10 & 6 & 5.446 & 11.785 \\
Row-order & certification & 239/252 & 10 & 6 & 5.828 & 12.899 \\
Row-order & completion & 247/252 & 10 & 6 & 5.446 & 11.785 \\
\bottomrule
\end{tabular}
\end{adjustbox}\end{table}

\subsection{Supporting predictive results}\label{app:predictive-results}
Table~\ref{tab:clipping} retains the principal natural-model scores and the nonnegative-prediction check. Figure~\ref{fig:prediction} shows the separate grid and digit ablations and the incremental blocks. Table~\ref{tab:app-revision_cv_cp} compares candidate, residual-indicator and residual-count predictors for CP work. The residual-only population has 2,333\ eligible records after the minimum stratum-size rule. These are held-out associations within the stated populations, not estimates of transfer to the harder collection.

\begin{table}[H]\centering\footnotesize
\caption{Natural-model prediction with unchanged folds and fits. Neg. counts negative predictions; Minimum is the smallest prediction. Subscript $+$ denotes clipping at zero. Full is $M_3$; Count is $M_0$.\vspace{2.0mm}}\label{tab:clipping}
\begin{adjustbox}{max width=\linewidth}
\begin{tabular}{lllrrrrrr}
\toprule
Task & Population & Block & Neg. & Minimum & $R^2$ & $R^2_+$ & MAE & MAE$_+$ \\
\midrule
completion & All & Count & 0 & 0.000 & 0.653 & 0.653 & 0.416 & 0.416 \\
completion & All & Full & 15,501 & -0.605 & 0.660 & 0.661 & 0.481 & 0.445 \\
completion & Positive & Count & 0 & 7.951 & 0.266 & 0.266 & 0.848 & 0.848 \\
completion & Positive & Full & 0 & 7.490 & 0.335 & 0.335 & 0.809 & 0.809 \\
certification & All & Count & 0 & 0.000 & 0.654 & 0.654 & 0.436 & 0.436 \\
certification & All & Full & 15,614 & -0.594 & 0.660 & 0.661 & 0.502 & 0.466 \\
certification & Positive & Count & 0 & 9.157 & 0.283 & 0.283 & 0.837 & 0.837 \\
certification & Positive & Full & 0 & 8.085 & 0.357 & 0.357 & 0.792 & 0.792 \\
\bottomrule
\end{tabular}
\end{adjustbox}\end{table}

\begin{table}[H]\centering\footnotesize
\caption{CP deterministic-work scores for $\log(1+10^6d)$. Every model includes count intercepts. The indicator and residual count enter separately after the candidate block. Positive residual means $U>0$.\vspace{2.0mm}}\label{tab:app-revision_cv_cp}
\begin{adjustbox}{max width=\linewidth}
\begin{tabular}{lllrrr}
\toprule
Task & Population & Predictors & N & $R^2$ & MAE \\
\midrule
completion & All & Count & 32,000 & 0.338 & 0.390 \\
completion & All & + Candidates & 32,000 & 0.340 & 0.404 \\
completion & All & + Residual indicator & 32,000 & 0.898 & 0.115 \\
completion & All & + Residual count & 32,000 & 0.947 & 0.083 \\
completion & Positive residual & Count & 2,333 & 0.049 & 1.078 \\
completion & Positive residual & + Residual count & 2,333 & 0.495 & 0.764 \\
certification & All & Count & 32,000 & 0.298 & 0.390 \\
certification & All & + Candidates & 32,000 & 0.300 & 0.415 \\
certification & All & + Residual indicator & 32,000 & 0.891 & 0.097 \\
certification & All & + Residual count & 32,000 & 0.924 & 0.072 \\
certification & Positive residual & Count & 2,333 & 0.029 & 1.106 \\
certification & Positive residual & + Residual count & 2,333 & 0.335 & 0.839 \\
\bottomrule
\end{tabular}
\end{adjustbox}\end{table}

\begin{table}[H]\centering\footnotesize
\caption{Occurrence of positive natural-model LP work. All calls are resolved after recovery. These scores assess occurrence, not the size of a positive count.\vspace{2.0mm}}\label{tab:app-revision_occurrence}
\begin{adjustbox}{max width=\linewidth}
\begin{tabular}{llrrr}
\toprule
Task & Predictors & AUC & Log loss & Brier \\
\midrule
completion & Count & 0.986 & 0.070 & 0.0215 \\
completion & + Residual count & 0.990 & 0.060 & 0.0188 \\
certification & Count & 0.987 & 0.069 & 0.0212 \\
certification & + Residual count & 0.990 & 0.059 & 0.0186 \\
\bottomrule
\end{tabular}
\end{adjustbox}\end{table}

\begin{table}[H]\centering\footnotesize
\caption{Lower generation-range sensitivity (seventeen to twenty-three clues). Count denotes $M_0$ and the residual-count row $M_3$. Separate fits do not identify a causal generator effect.\vspace{2.0mm}}\label{tab:app-revision_cv_low}
\begin{adjustbox}{max width=\linewidth}
\begin{tabular}{lllrrr}
\toprule
Task & Population & Predictors & N & $R^2$ & MAE \\
\midrule
completion & Low-count finished & Count & 3,500 & 0.352 & 3.426 \\
completion & Low-count finished & + Residual count & 3,500 & 0.443 & 3.142 \\
certification & Low-count finished & Count & 3,500 & 0.358 & 3.600 \\
certification & Low-count finished & + Residual count & 3,500 & 0.446 & 3.303 \\
\bottomrule
\end{tabular}
\end{adjustbox}\end{table}

\subsection{Exploratory clue-deletion margins}\label{app:margin-results}
The fixed \NMargin-puzzle selection has exact margin optima. Margin optimisation omits hard clue fixings, excludes the reference and minimises violated original clues, with a redundant lower bound of one based on input uniqueness. The independent CP check omits that bound. Each witness is a valid alternative grid; exact values require matching solver bounds.

All \SubmissionMarginPositiveN\ positive natural-certification LP outcomes have margin one. Table~\ref{tab:app-review_margin_counts} shows that variation in margin and positive LP work do not overlap within exact counts. For the exploratory regression, $M_4=M_3+\beta_\rho\rho(P)$ uses identical eligible records and folds, with unpenalised count intercepts. Table~\ref{tab:app-revision_margin_gain} gives its paired gain over $M_3$. The CP comparison is a different outcome and also gives no improvement under this specification.

\begin{table}[H]\centering\footnotesize
\caption{Margin sample by clue range. Positive LP counts use recovered natural certification. Varying counts enumerates exact-count strata with more than one observed margin.\vspace{2.0mm}}\label{tab:app-review_margin_counts}
\begin{adjustbox}{max width=\linewidth}
\begin{tabular}{lrrrrrrr}
\toprule
Clues & N & $\rho=1$ & 2 & 3 & 4 & Positive LP & Varying counts \\
\midrule
17--23 & 35 & 35 & 0 & 0 & 0 & 18 & 0 \\
24--30 & 35 & 35 & 0 & 0 & 0 & 1 & 0 \\
31--40 & 50 & 50 & 0 & 0 & 0 & 0 & 0 \\
41--60 & 100 & 86 & 14 & 0 & 0 & 0 & 10 \\
61--80 & 100 & 8 & 45 & 39 & 8 & 0 & 18 \\
\bottomrule
\end{tabular}
\end{adjustbox}\end{table}

\begin{table}[H]\centering\footnotesize
\caption{Incremental margin gain, $M_4$ versus $M_3$, on the same held-out records. Positive values favour the margin. Intervals condition on fitted folds. Natural uses $\log(1+\mathrm{LP})$ and CP uses $\log(1+10^6d)$.\vspace{2.0mm}}\label{tab:app-revision_margin_gain}
\begin{adjustbox}{max width=\linewidth}
\begin{tabular}{lrrrr}
\toprule
Outcome & $\Delta R^2$ & 95\% interval & MAE gain & 95\% interval \\
\midrule
natural & $-3.85\times10^{-7}$ & [$-5.68\times10^{-5}$, $5.49\times10^{-5}$] & -0.0017 & [-0.0027, -0.0007] \\
cp & $-4.65\times10^{-5}$ & [$-9.33\times10^{-5}$, $-2.06\times10^{-5}$] & -0.0005 & [-0.0008, -0.0002] \\
\bottomrule
\end{tabular}
\end{adjustbox}\end{table}

\subsection{Matched clue additions and overlapping generators}\label{app:margin-chains}
Every sequence starts from an independently verified unique base and adds only correct clues. Its integer completion set remains the same singleton. Table~\ref{tab:app-revision_chains} gives adjacent-pair denominators and increase sizes. Assignment and CP finish all 2,520 adjacent pairs per task. Raw-natural comparisons use only the finished pairs shown; all remaining adjacent pairs are unresolved. These denominators count pairs, not individual calls. Intervals in Table~\ref{tab:app-revision_chain_intervals} resample bases, retaining both addition orders.

\begin{table}[H]\centering\footnotesize
\caption{Nested additions: finished pairs, direction of change, affected bases and median increases. MILP counts are LP iterations; CP uses deterministic work with tie tolerance $10^{-12}$. Up/non-tied conditions on a counter change; all-pair increase rates are in Table~\ref{tab:app-revision_chain_intervals}.\vspace{2.0mm}}\label{tab:app-revision_chains}
\begin{adjustbox}{max width=\linewidth}\begin{tabular}{llrrrrrrr}
\toprule Model & Task & Done & Up & Down & Tied & Up/non-tied & Bases & Median up \\ \midrule
CP-SAT & certification & 2,520 & 27 & 68 & 2,425 & 0.284 & 9 & $2.97\times10^{-6}$ \\
CP-SAT & completion & 2,520 & 357 & 657 & 1,506 & 0.352 & 20 & $4.00\times10^{-8}$ \\
Assignment & certification & 2,520 & 0 & 0 & 2,520 & -- & 0 & -- \\
Assignment & completion & 2,520 & 0 & 0 & 2,520 & -- & 0 & -- \\
Raw natural & certification & 2,463 & 27 & 104 & 2,332 & 0.206 & 14 & 4,664 \\
Raw natural & completion & 2,489 & 38 & 123 & 2,328 & 0.236 & 17 & 3,860 \\
\bottomrule\end{tabular}\end{adjustbox}\end{table}

\begin{table}[H]\centering\footnotesize
\caption{Base-cluster bootstrap intervals for nonconstant empirical increase rates. Both orders stay together. Constant outcomes are omitted rather than interpreted as guarantees.\vspace{2.0mm}}\label{tab:app-revision_chain_intervals}
\begin{adjustbox}{max width=\linewidth}
\begin{tabular}{llrrr}
\toprule
Model & Task & Up fraction & CI lower & CI upper \\
\midrule
CP-SAT & certification & 0.011 & 0.006 & 0.017 \\
CP-SAT & completion & 0.142 & 0.133 & 0.150 \\
Raw natural & certification & 0.011 & 0.007 & 0.015 \\
Raw natural & completion & 0.015 & 0.011 & 0.020 \\
\bottomrule
\end{tabular}
\end{adjustbox}\end{table}

The same chains extend augmentation into the benchmark deletion generator's range. Table~\ref{tab:app-revision_overlap} retains the overlap at twenty-four through thirty clues. Its forty augmentation records at each count share twenty bases and are dependent. This small, matched construction check helps interpret the twenty-three-to-twenty-four-clue boundary, but does not estimate a population-wide generator effect.

\begin{table}[H]\centering\footnotesize
\caption{Overlapping construction ranges: augmentation chains and benchmark deletion instances. The augmentation records share bases and are not independent draws.\vspace{2.0mm}}\label{tab:app-revision_overlap}
\begin{adjustbox}{max width=\linewidth}
\begin{tabular}{rlrrrr}
\toprule
Clues & Construction & N & Singles fraction & Mean residual & Median residual \\
\midrule
24 & augmentation & 40 & 0.925 & 2.40 & 0.0 \\
24 & deletion & 500 & 0.508 & 19.64 & 0.0 \\
25 & augmentation & 40 & 0.950 & 1.75 & 0.0 \\
25 & deletion & 500 & 0.520 & 18.63 & 0.0 \\
26 & augmentation & 40 & 0.950 & 1.75 & 0.0 \\
26 & deletion & 500 & 0.528 & 17.63 & 0.0 \\
27 & augmentation & 40 & 0.975 & 0.65 & 0.0 \\
27 & deletion & 500 & 0.664 & 12.38 & 0.0 \\
28 & augmentation & 40 & 1.000 & 0.00 & 0.0 \\
28 & deletion & 500 & 0.764 & 8.23 & 0.0 \\
29 & augmentation & 40 & 1.000 & 0.00 & 0.0 \\
29 & deletion & 500 & 0.798 & 6.99 & 0.0 \\
30 & augmentation & 40 & 1.000 & 0.00 & 0.0 \\
30 & deletion & 500 & 0.852 & 4.62 & 0.0 \\
\bottomrule
\end{tabular}
\end{adjustbox}\end{table}

\end{document}